\pgfplotsset{compat=1.8}
\tikzstyle{bsq}=[rectangle, draw, thick, minimum width=1cm, minimum height=1cm] 
\tikzstyle{bver}=[rectangle, draw, thick, minimum width=1cm, minimum height=2cm]
\tikzstyle{bhor}=[rectangle, draw, thick, minimum width=2cm, minimum height=1cm]
\theoremstyle{plain}
\theoremstyle{definition}
\newtheorem{theorem}{Theorem}[section]
\newtheorem{definition}[theorem]{Definition}
\newtheorem{example}[theorem]{Example}
\newtheorem{proposition}[theorem]{Proposition}
\newtheorem{corollary}[theorem]{Corollary}
\DeclareMathAlphabet{\mathpzc}{OT1}{pzc}{m}{it}
\begin{document}
\date{}
\title{Set-valued domino tableaux and shifted set-valued domino tableaux}
\author{Florence Maas-Gari\'epy}
\address{Laboratoire de Combinatoire et d'Informatique Math\'ematique,
Universit\'e du Qu\'ebec \`a Montr\'eal}
\email{maas-gariepy.florence@courrier.uqam.ca}
\author{Rebecca Patrias}
\address{Laboratoire de Combinatoire et d'Informatique Math\'ematique,
Universit\'e du Qu\'ebec \`a Montr\'eal}
\email{patriasr@lacim.ca}

\begin{abstract}
We prove $K$-theoretic and shifted $K$-theoretic analogues of the bijection of Stanton and White between domino tableaux and pairs of semistandard tableaux. As a result, we obtain product formulas for  pairs of stable Grothendieck polynomials and pairs of $K$-theoretic $Q$-Schur functions.
\end{abstract}

\maketitle

\section{Introduction}
Recall that a \textit{partition} is a finite, nonincreasing sequence of positive integers $\lambda=(\lambda_1,\ldots,\lambda_t)$ 
and that any partition can be identified with the corresponding \textit{Young diagram}---a left-justified array of boxes with $\lambda_i$ boxes in the $i$th row from the top. A filling of the boxes of a Young diagram with non-negative integers such that entries weakly increase across rows and strictly increase down columns   gives a \textit{semistandard Young tableau}. The \textit{Schur functions} are symmetric functions that are indexed by partitions. Each element of the set of Schur functions can be defined as a weighted generating function of semistandard Young tableaux of the corresponding partition shape. The set of Schur functions forms a linear basis for the ring of symmetric functions and appears naturally in many areas of mathematics including representation theory, Schubert calculus, and gauge theory. 

Of particular interest is the question of how to express a product of two Schur functions since the answer has meaning in the fields mentioned above. One way to answer this question for certain products is to consider \textit{domino tableaux}, where a domino tableau is an array of dominoes ($2\times 1$ and $1\times 2$ pieces) in the shape of a Young diagram, where each domino is filled with a positive integer, rows are weakly increasing and columns are strictly increasing. In \cite{stanton1985schensted}, D. Stanton and D. White prove that for arbitrary partitions $\mu$ and $\nu$, the product $s_\mu s_\nu$ can be written as a sum of weighted generating functions of domino tableaux. Their proof was later simplified by C. Carr\'e and B. Leclerc \cite{carre1995splitting}.

A well-known analogue of the Schur functions is the set of \textit{$Q$-Schur functions} $\{Q_\lambda\}$, which are indexed by partitions $\lambda=(\lambda_1,\lambda_2,\ldots,\lambda_t)$ with $\lambda_t\geq t$. 
In \cite{chemli}, Z. Chemli introduces the notion of a \textit{shifted domino tableau} and proves the analogue of the Stanton--White result in this setting. Namely, he proves that for shifted partitions $\mu$ and $\nu$, the product $Q_\mu Q_\nu$ can be written as a sum of weighted generating functions of shifted domino tableaux.

In addition to this shifted analogue of the Schur functions, there is also a natural $K$-theoretic analogue called the \textit{stable Grothendieck polynomials}, denoted $G_\lambda$ \cite{fomin1996yang,buch2002littlewood}. Combinatorially, we obtain the stable Grothendieck polynomials by allowing finite, non-empty subsets of positive integers to fill the boxes of a Young diagram instead of only allowing single entries. The stable Grothendieck polynomials are called $K$-theoretic analogues because where there is a deep connection between Schur functions and cohomology of the Grassmannian, there is the same connection between stable Grothendieck polynomials and $K$-theory of the Grassmannian. A reader unfamiliar with cohomology theory and $K$-theory need not worry; we will only address the combinatorial properties of these symmetric functions. There is also a natural $K$-theoretic analogue of the $Q$-Schur functions, i.e., a natural shifted analogue of the stable Grothendieck polynomials \cite{ikeda2013k, graham2015excited}, denoted $GQ_\lambda$. The $GQ_\lambda$ appear in the study of the $K$-theory of the Lagrangian Grassmannian.

In this paper, we prove the $K$-theoretic analogue of both the Stanton--White result and the Chemli result, thus obtaining product formulas for pairs $G_\mu G_\nu$ and $GQ_\mu GQ_\nu$. Note that $G_\mu G_\nu$ and $GQ_\mu GQ_\nu$ expand positively in terms of the $G_\lambda$'s and $GQ_\lambda$'s, respectively; however, no combinatorial description of this $GQ_\mu GQ_\nu$ expansion is known.
To obtain our results, we introduce the notions of \textit{set-valued domino tableaux} and \textit{shifted set-valued domino tableaux}.

The paper proceeds as follows. In Section~\ref{sec:prelim}, we review the necessary combinatorial background for the rest of the paper: tableaux, Young diagrams, symmetric functions, and Schur functions. Section~\ref{sec:dominotableaux} gives an introduction to domino tableaux, and Section~\ref{sec:bijection} explains the bijection that leads to the result of Stanton and White. In Section~\ref{sec:K-theory}, we introduce the stable Grothendieck polynomials and set-valued domino tableaux, and we prove the $K$-theoretic analogue of the Stanton--White result. Section~\ref{sec:shifted} gives the necessary background on $Q$-Schur functions, shifted Young tableaux and shifted domino tableaux, and reviews the result of Chemli. We conclude in Section~\ref{sec:shiftedK} by proving the shifted $K$-theoretic analogue of Chemli's result.

\section{Preliminaries}\label{sec:prelim}
We begin by reviewing basic notions related to symmetric functions, partitions, and Young tableaux. We refer the reader to \cite{stanley} for a more in-depth study of these topics. 

\subsection{Partitions and tableaux}\label{sec:tableaux}

A \textit{partition} is a finite, nonincreasing sequence of positive integers $\lambda=(\lambda_1,\ldots,\lambda_k)$. We say that $\lambda$ is a \textit{partition of $n$}, written $\lambda\vdash n$ or $|\lambda|=n$, when $\lambda_1+\cdots+\lambda_k=n$. For example, $(2,1,1)$ is a partition of 4, and there are five partitions of 4 in total. To each partition, we can associate a \textit{Young diagram}: a left-justified array of boxes with $\lambda_i$ boxes in the $i$th row from the top. We often equate a partition $\lambda$ with its Young digram. The Young diagrams for the partitions of 4 are shown below.

\begin{center}
\ytableausetup{boxsize=.1in}
\ydiagram{1,1,1,1}\hspace{.3in}
\ydiagram{2,1,1}\hspace{.3in}
\ydiagram{2,2}\hspace{.3in}
\ydiagram{3,1}\hspace{.3in}
\ydiagram{4}
\end{center}

A \textit{semistandard Young tableau} of shape $\lambda$ is a filling of the boxes of the Young diagram of shape $\lambda$ with positive integers such that the entries weakly increase from left to right along rows and strictly increase down columns. 
A \textit{standard Young tableau} of shape $\lambda\vdash n$ is a semistandard Young tableau of shape $\lambda$ such that each positive integer $1,2,\ldots,n$ appears exactly once. For a semistandard Young tableau $T$, let $sh(T)$ denote the shape of $T$ and $|T|$ denote the number of boxes of $T$ or, equivalently, the number of entries in $T$. For example, $T_1$ below is a semistandard Young tableau and $T_2$ is a standard Young tableau. We see that $|T_1|=|T_2|=11$ and $sh(T_1)=sh(T_2)=(5,3,3)$.
\begin{center}
\ytableausetup{boxsize=.2in}
$T_1=$
\begin{ytableau}
1 & 1 & 1 & 3 & 4 \\
3 & 3 & 5 \\
4 & 5 & 7
\end{ytableau}\hspace{.5in}
$T_2=$
\begin{ytableau}
1 & 3 & 4 & 9 & 11\\
2 & 5 & 7\\
6 & 8 & 10
\end{ytableau}
\end{center}

Consider a semistandard Young tableau as sitting in the southeast quadrant of the plane with top left corner at the origin and each box of side length 1. Notice that each cell of the tableau is crossed by a unique diagonal $D_k$, where $D_k$ is the line $-x+k$ for some $k\in\mathbb{Z}$. For example, the boxes of $T_2$ with entries 1, 5, and 10 lie on $D_0$ while the boxes with entries 2 and 8 lie on $D_{-1}$. The \textit{diagonal reading word} of a semistandard tableau $T$ is the word obtained by reading the entries along each diagonal from northwest to southeast starting with the bottom diagonal. We insert the symbol ``$/$'' between the segments obtained from each diagonal. For example, the diagonal reading word for $T_1$ and $T_2$ are respectively $4\ /\ 3, 5\ /\ 1, 3, 7\ /\ 1, 5 \ /\ 1\ /\ 3\ / \ 4$ and $6\ / \ 2,8\ / \ 1,5,10 \ / \ 3,7 \ / \ 4 \ / \ 9 \ / \ 11$. The diagonal reading word of a semistandard Young tableau defines it uniquely, a property that we will use in Theorem~\ref{thm:bijectiondomino}.

\subsection{Symmetric functions}\label{sec:symmetricfunctions}
A \textit{weak composition} is a countable sequence of non-negative integers $\alpha=(\alpha_1,\alpha_2,\ldots)$ such that only finitely many $\alpha_i$ are nonzero. Let $S_n$ denote the symmetric group of order $n!$, the group of all permutations of the set $\{ 1,2,\ldots, n \}$.

Let $x=(x_1,x_2,\ldots)$ be a countable set of variables, and for a weak composition $\alpha$, define $x^\alpha$ to be $x_1^{\alpha_1}x_2^{\alpha_2}\cdots$. A \textit{symmetric function} is a formal power series $f(x)=\sum_{\alpha}c_\alpha x^\alpha$ such that $c_\alpha\in\mathbb{R}$ and such that, for any non-negative integer $n$ and any $\sigma \in S_n$,
\[ f(x_{\sigma (1)}, x_{\sigma (2)},\ldots,x_{\sigma (n)}, x_{n+1},\ldots) = f(x_1, x_2,\ldots).\] 
We say that a symmetric function is \textit{homogeneous of degree $n$} if each of its monomials has degree $n$. We denote the set of homogeneous symmetric functions of degree $n$ by $\Lambda ^{n}$ and the set of symmetric functions by $\Lambda$.

For example, 
\[f(x)=\sum_{i\leq j} x_ix_j = x_1^2+x_1x_2+x_1x_3+\ldots \in \Lambda^2\] is a homogeneous symmetric function of degree 2,
\[g(x)=x_1+x_2+\ldots + x_1^2+x_2^2+\ldots\in \Lambda\] is a symmetric function but is not homogeneous of any degree, and 
\[h(x)=x_1+\sum_{i\in\mathbb{N}}x_i^2\] is not a symmetric function.

It is easy to see that $\Lambda$ is an algebra with identity element $1\in\Lambda^0$. In other words, $\Lambda$ is an $\mathbb{R}$-vector space under addition and a ring under multiplication. 

\subsection{Schur functions}\label{sec:Schurfunctions}
The algebra of symmetric functions has many nice bases, which are well studied. We next introduce one such basis: the basis of Schur functions. This basis is of great interest because of its connections to other areas of mathematics. For example, Schur functions are closely related to the irreducible representations of both the symmetric group and the general linear group. They also appear in the area of Schubert calculus as a tool for computing the structure constants in the cohomology ring of the Grassmannian. There are many ways to define the Schur functions and we use the combinatorial definition. 

Let $T$ be a semistandard Young tableau. We can associate a monomial $x^T$ in the variable set $(x_1,x_2,\ldots)$ to $T$ by letting the exponent of $x_i$ be the number of times the entry $i$ appears in $T$. For example, $x^{T_1}=x_1^3x_3^3x_4^2x_5^2x_7$ and $x^{T_2}=x_1x_2x_3x_4x_5x_6x_7x_8x_9x_{10}x_{11}$ for $T_1$ and $T_2$ from Section~\ref{sec:tableaux}.

The basis of Schur functions is indexed by partitions. We define the \textit{Schur function $s_\lambda$} by 
\[s_\lambda=s_\lambda(x) = \sum_{sh(T)=\lambda}x^T,\]
where we sum over all semistandard Young tableaux T of shape $\lambda$. Note that if $\lambda\vdash n$, then $s_\lambda \in \Lambda^n$. It is easy to see that each monomial has degree $n$, but it is not obvious from the combinatorial definition that $s_\lambda$ is indeed symmetric.

\begin{example}
We can compute that 
\[s_{(2,1)}=x_1^2x_2+x_1^2x_3+x_1^2x_9+x_1x_2^2+x_1x_2x_3+x_1x_2x_3+x_1x_3^2+x_2^2x_3+x_2x_3^2+\ldots,  \]
where the monomials given correspond to the semistandard tableaux below.

\begin{center}
\begin{ytableau}1 & 1 \\ 2 \end{ytableau}\hspace{.2in}
\begin{ytableau}1 & 1 \\ 3 \end{ytableau}\hspace{.2in}
\begin{ytableau}1 & 1 \\ 9 \end{ytableau}\hspace{.2in}
\begin{ytableau}1 & 2 \\ 2 \end{ytableau}\hspace{.2in}
\begin{ytableau}1 & 2 \\ 3 \end{ytableau}\hspace{.2in}
\begin{ytableau}1 & 3 \\ 2 \end{ytableau}\hspace{.2in}
\begin{ytableau}1 & 3 \\ 3 \end{ytableau}\hspace{.2in}
\begin{ytableau}2 & 2 \\ 3 \end{ytableau}\hspace{.2in}
\begin{ytableau}2 & 3 \\ 3 \end{ytableau}
\end{center}
\end{example}

Since the Schur functions form a linear basis for $\Lambda$, we know that we can express any product $s_\lambda s_\mu$ as a finite sum of Schur functions: $s_\lambda s_\mu =\sum_{\nu}c_{\lambda,\mu}^\nu s_\nu$. This idea has applications in representation theory and Schubert calculus, as mentioned above, and has been very well studied. The coefficients $c_{\lambda,\mu}^{\nu}$ are called \textit{Littlewood--Richardson coefficients}, and there are many combinatorial rules for computing them. In Theorem~\ref{thm:bijectiondominosym}, we give a rule for expressing this product as a sum over domino tableaux for certain pairs $s_\lambda$ and $s_\mu$.

\section{Domino tableaux}\label{sec:dominotableaux}

First, define a \textit{domino} to be a $2\times 1$ or $1\times 2$ rectangle inside of a Young diagram. The red and blue shapes below are both dominoes.
\begin{center}
\ytableausetup{boxsize=.16in}
\begin{ytableau}
$ $ & & & *(red) & *(red)\\
$ $ &*(blue) & & & \\
$ $ & *(blue) & & & 
\end{ytableau}
\end{center}
We say that a Young diagram is \textit{pavable} if it can be written as the disjoint union of dominoes. The reader may verify that the partition $(2,2,2)$ is pavable, while the partition $(5,5,5)$ shown above is not (it has an odd number of boxes) and the partition $(5,3,3,2,1)$ is not. If $\lambda$ is pavable, we call any such covering a \textit{domino paving}.

\begin{definition}\label{def:dominotab}
A \textit{domino tableau of shape $\lambda$} is the filling of a domino paving of $\lambda$ by positive integers such that:
\begin{itemize}
\item entries weakly increase from left to right and
\item columns strictly increase from top to bottom.
\end{itemize}
\end{definition}

We again think of the top left corner of a domino tableau as sitting at the origin of the plane. In this setting, each domino in a domino tableau is crossed by a unique diagonal $D_{2k}$ of equation $ -x+2k$.
We define the \textit{diagonal reading word} of a domino tableau to be the integer sequence obtained by reading northwest to southeast along each diagonal $D_{2k}$ starting with the bottom diagonal. We again separate the entries on distinct diagonals by ``/''. Unlike for the diagonal reading of semistandard Young tableaux, the diagonal reading of a domino tableau does not define it uniquely. For example, the diagonal reading ``1'' could refer to a single vertical domino or a single horizontal domino. 

\begin{example}
The following figure represents a domino tableau of shape $(5,4,2,1)$ with its diagonals. The diagonal reading word of this tableau is  2 / 1,3 / 1,6 / 5.  
\begin{center}
\resizebox{4.5cm}{!}{
\begin{tikzpicture}[node distance=0 cm,outer sep = 0pt]
        \tikzstyle{every node}=[font=\LARGE]
        \node[bver] (1) at ( 1.5, 3) {1};
        \node[bver] (2) [below = of 1] {2};
        \node[bhor] (3) at ( 3, 3.5) {1};
        \node[bver] (4) at ( 2.5, 2) {3};
        \node[bhor] (5) [right = of 3] {5};
        \node[bhor] (6) at ( 4, 2.5) {6};
        \draw [dashed] ( 1, 4) -- ( 5, 0);
        \draw ( 5, 0) node[right] {$D_0$};
        \draw [dashed] ( 1, 2) -- ( 4, -1);
        \draw ( 4, -1) node[right] {$D_{-2}$};
        \draw [dashed] ( 3, 4) -- ( 6, 1);
        \draw ( 6, 1) node[right] {$D_{2}$};
        \draw [dashed] ( 5, 4) -- ( 7, 2);
        \draw ( 7, 2) node[right] {$D_{4}$};
\end{tikzpicture}
}
\end{center}
\end{example}

We can divide the dominoes of a domino paving into two categories depending on how they are cut by a diagonal $D_{2k}$:

\begin{enumerate}
\item We call a domino a \textit{type 1 domino} if the small triangle cut by the diagonal points upward. 

\begin{minipage}{0.4\linewidth}
\begin{center}
\resizebox{1cm}{!}{
\begin{tikzpicture}[node distance=0 cm,outer sep = 0pt]
        \node[bver] (1) at ( 1.5, 3) {};
        \draw [dashed, thick] ( 1, 4) -- ( 3, 2);
\end{tikzpicture}
}
\end{center}
\end{minipage}
\quad
\begin{minipage}{0.4\linewidth}
\begin{center}
\resizebox{1.2cm}{!}{
\begin{tikzpicture}[node distance=0 cm,outer sep = 0pt]
        \node[bhor] (1) at ( 1.5, 3) {};
        \draw [dashed, thick] ( 1.5, 3.5) -- ( 3, 2);
\end{tikzpicture}
}
\end{center}
\end{minipage}

\item We call a domino a \textit{type 2 domino} if the small triangle cut by the diagonal points downward. 

\begin{minipage}{0.4\linewidth}
\begin{center}
\resizebox{0.85cm}{!}{
\begin{tikzpicture}[node distance=0 cm,outer sep = 0pt]
        \node[bver] (1) at ( 1.5, 3) {};
        \draw [dashed, thick] ( 1, 3) -- ( 2.5, 1.5);
\end{tikzpicture}
}
\end{center}
\end{minipage}
\quad
\begin{minipage}{0.4\linewidth}
\begin{center}
\resizebox{1cm}{!}{
\begin{tikzpicture}[node distance=0 cm,outer sep = 0pt]
        \node[bhor] (1) at ( 1.5, 3) {};
        \draw [dashed, thick] ( 0.5, 3.5) -- ( 2, 2);
\end{tikzpicture}
}
\end{center}
\end{minipage}
\end{enumerate}

We next define the \textit{2-quotient} of a partition $\lambda=(\lambda_1,\ldots,\lambda_k)\vdash n$, a pair of partitions $(\mu,\nu)$ obtained in the following way:

\begin{enumerate}
\item First define $L= (l_1, l_2,\ldots, l_k )$, where $l_i = \lambda_i + k - i$ for $i\in \{ 1,2,...,k \}$.
\item Let $M$ be obtained from $L$ by successively replacing the even components of $L$ by $0,2,4,\ldots$ from right to left and the odd components by $1,3,5\ldots$ from right to left.
\item To obtain $\mu$, subtract the even components of $L$ by the even components of $M$ and divide by 2. Delete the components that are 0.
\item To obtain $\nu$, subtract the odd components of $L$ by the odd components of $M$ and divide by 2. Delete the components that are 0.
\end{enumerate}
\begin{example}
Let $\lambda = (4,2,2,1,1,1)$. Then we have that 

\begin{enumerate}
\item $L = (4+6-1, 2+6-2, 2+6-3, 1+6-4, 1+6-5, 1+6-6) = (9,6,5,3,2,1)$,
\item $M = (7,2,5,3,0,1)$,
\item $\mu = \frac{1}{2} ( (6,2) - (2,0) ) = \frac{1}{2}  (4,2) = (2,1)$, and
\item $\nu = \frac{1}{2} ( (9,5,3,1) - (7,5,3,1) ) = \frac{1}{2} (2,0,0,0) = (1,0,0,0)=(1)$.
\end{enumerate}
Thus the 2-quotient of $(4,2,2,1,1,1)$ is the pair $((2,1),(1))$.
\end{example}
Note that this process is reversible, i.e., every pair of partitions $(\mu,\nu)$ is the 2-quotient of some partition $\lambda$. We discuss the reverse procedure in the next section.


\section{Bijection between domino tableaux and semistandard tableaux}\label{sec:bijection}
We now describe the bijection used to prove the following theorem. Our main result of Section~\ref{sec:K-theory} is a generalization of this theorem, so it will be useful in later sections to understand this bijection.

\begin{theorem}\cite{carre1995splitting,stanton1985schensted}\label{thm:bijectiondomino}
Let $\lambda$ be a pavable partition with 2-quotient ($\mu, \nu$). There is a bijection between the set of domino tableaux of shape $\lambda$ and the set of pairs of Young tableaux $(t_1,t_2)$ of shape $(\mu, \nu)$.
\end{theorem}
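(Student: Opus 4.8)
The plan is to construct the bijection explicitly through the diagonal reading word, exploiting the fact recorded in Section~\ref{sec:tableaux} that a semistandard Young tableau is determined by its diagonal reading word. Given a domino tableau $T$ of shape $\lambda$, I would first attach to each domino three pieces of data: its entry, the unique even diagonal $D_{2k}$ crossing it, and its type ($1$ or $2$) as defined in Section~\ref{sec:dominotableaux}. The forward map then separates the diagonal reading word of $T$ into two subwords $w_1$ and $w_2$, where $w_i$ retains only the entries coming from type-$i$ dominoes, keeps the diagonal separators, and relabels each diagonal $D_{2k}$ by $D_k$. Reading $w_1$ and $w_2$ as diagonal reading words yields candidate tableaux $t_1$ and $t_2$, and I would set $\phi(T)=(t_1,t_2)$.

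The first thing to verify is that $w_1$ and $w_2$ really are diagonal reading words of honest semistandard tableaux. Within a single diagonal the monotonicity of the entries is inherited directly from the column-strict and row-weak conditions on $T$; the substantive point is that the step from diagonal $D_{2k}$ to $D_{2k+2}$ in $T$ induces the correct interlacing between diagonals $D_k$ and $D_{k+1}$ of $t_i$, which is exactly the weak-row/strict-column requirement for a semistandard tableau. I would check this by a local analysis: I would enumerate the ways two dominoes of the same type lying on adjacent even diagonals can sit relative to one another inside a paving, and translate each admissible local configuration into the corresponding admissible adjacency of cells in the quotient shape. Once $w_1$ and $w_2$ are known to be valid, the uniqueness of semistandard diagonal reading words makes $t_1$ and $t_2$ well defined.

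Next I would identify the shapes, showing that $t_1$ has shape $\mu$ and $t_2$ has shape $\nu$, where $(\mu,\nu)$ is the $2$-quotient of $\lambda$. By construction, the number of cells of $t_i$ on diagonal $D_k$ equals the number of type-$i$ dominoes on diagonal $D_{2k}$ in $T$; a preliminary lemma is that this count depends only on $\lambda$ and not on the chosen paving. I would compute these diagonal multiplicities directly from the sequence $L=(l_1,\dots,l_k)$ of Section~\ref{sec:dominotableaux}: splitting $L$ into its even-valued and odd-valued entries is precisely the passage to the two components of the $2$-quotient, and the positions that the construction of $M$ assigns to the even (resp.\ odd) entries encode the row lengths of $\mu$ (resp.\ $\nu$). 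Matching the per-diagonal counts on the two sides then pins down the shapes.

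Finally I would build the inverse. Given a pair $(t_1,t_2)$ of shapes $(\mu,\nu)$, I would merge their diagonal reading words into a single sequence indexed by the even diagonals $D_{2k}$, tagging each entry with its type, and then reconstruct a domino tableau of the shape $\lambda$ recovered from $(\mu,\nu)$ by the reverse $2$-quotient procedure. Here lies the main obstacle: unlike for ordinary tableaux, the diagonal reading word of a domino tableau does \emph{not} by itself determine the orientations of its dominoes, so the uniqueness statement cannot be invoked directly to conclude injectivity. I expect the crux to be showing that the type, together with the diagonal and the requirement that the dominoes tile the fixed shape $\lambda$, forces each domino's orientation and position uniquely. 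I would argue this by inducting on the diagonals from the bottom up, showing that at each step the next available cell of $\lambda$ and the type of the incoming domino leave exactly one legal placement, and then checking that the assembled domino tableau is sent back to $(t_1,t_2)$ by $\phi$. Confirming that the two constructions are mutually inverse then completes the proof.
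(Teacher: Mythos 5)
Your forward map is exactly the map $\Gamma$ that the paper uses (split the diagonal reading word by domino type and invoke the uniqueness of diagonal reading words for semistandard tableaux), and your plan for identifying the shapes via the sequence $L$ is the standard one, so that half of the proposal is sound. The gap is in the inverse. You propose to prove that ``the type, together with the diagonal and the requirement that the dominoes tile the fixed shape $\lambda$, forces each domino's orientation and position uniquely,'' and to run an induction over the diagonals from the bottom up using only the next available cell of $\lambda$ and the type of the incoming domino. That claim is false. Take $\lambda=(2,2)$, whose only even diagonal meeting the shape is $D_0$ and whose $2$-quotient is $((1),(1))$. Both pavings of $(2,2)$ --- two horizontal dominoes, or two vertical dominoes --- consist of exactly one type $1$ and one type $2$ domino, each crossed by $D_0$: in the horizontal paving the top domino is type $2$ and the bottom is type $1$, while in the vertical paving the left domino is type $1$ and the right is type $2$. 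So the shape together with the diagonal-and-type data of its dominoes does \emph{not} determine the paving, and a purely geometric bottom-up-by-diagonal induction has nothing with which to break the tie.

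What actually disambiguates is the filling, and this is why the paper's $\Gamma^{-1}$ is organized by entry values rather than by diagonals: one inserts the values $u_1<u_2<\cdots$ in increasing order, and at step $i$ adds, for every cell of $t_1$ (resp.\ $t_2$) with value $u_i$ on diagonal $D_k$, a type $1$ (resp.\ type $2$) domino with entry $u_i$ on $D_{2k}$ to the partial tableau $T^{(i-1)}$. The key lemma --- where the real work of Stanton--White and Carr\'e--Leclerc lives --- is that a domino of prescribed type crossed by a prescribed even diagonal can be adjoined to a given Young-diagram-shaped region in at most one way so that the result is again a Young diagram; since the partial shapes $\lambda^{(i)}$ are Young diagrams (precisely because the values are inserted in increasing order), every placement is forced. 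In the $(2,2)$ example this is what selects the vertical paving when the entry of $t_1$ is at most that of $t_2$ and the horizontal paving otherwise. You need to replace the diagonal-by-diagonal reconstruction with this value-by-value one (or otherwise bring the entries into the uniqueness argument); as written, your inverse map is not well defined.
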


Theorem ~\ref{thm:bijectiondomino} is proven by giving an explicit bijection $\Gamma$ that sends a domino tableau to the associated pair of Young tableaux. The bijection $\Gamma$ consists of considering the diagonal reading of entries in type 1 dominoes and of type 2 dominoes separately. More precisely, let $T$ be a domino tableau, and form the diagonal reading word for $T$. Let $w_1$ be the word obtained by restricting this diagonal reading word to the entries that come from type 1 dominoes and let $w_2$ be the word obtained by restricting the diagonal reading word for $T$ to entries coming from type 2 dominoes. We then let semistandard tableau $t_1$ be the unique Young tableau with diagonal reading word $w_1$ and let $t_2$ be the unique Young tableau with diagonal reading $w_2$. We illustrate this bijection below using an example.
\begin{center}
\resizebox{16.5cm}{!}{
\begin{tikzpicture}[node distance=0 cm,outer sep = 0pt]

        \tikzstyle{every node}=[font=\Large]
        
        \draw ( 0, 6) node {$T = $ \ };

        \node[bhor] (1) at ( 1.5, 8) {1};
        \node[bver] (2) at ( 1, 6.5) {2};
        \node[bver] (3) [right = of 2] {2};
        \node[bver] (4) at ( 3, 7.5) {2};
        \node[bver] (5) [right = of 4] {3};
        \node[bhor] (6) at ( 5.5, 8) {4};
        \node[bhor] (7) at ( 3.5, 6) {4};
        \node[bhor] (8) at ( 1.5, 5) {3};
        \node[bver] (9) at ( 1, 3.5) {4};

        \draw [dashed] ( .5, 8.5) -- ( 5.5, 3.5);
        \draw ( 5.5, 3.5) node[right] {$D_0$};
        \draw [dashed] ( 2.5, 8.5) -- ( 6.5, 4.5);
        \draw ( 6.5, 4.5) node[right] {$D_{2}$};
        \draw [dashed] ( 4.5, 8.5) -- ( 7.5, 5.5);
        \draw ( 7.5, 5.5) node[right] {$D_{4}$};
        \draw [dashed] ( .5, 6.5) -- ( 4.5, 2.5);
        \draw ( 4.5, 2.5) node[right] {$D_{-2}$};
        \draw [dashed] ( .5, 4.5) -- ( 3.5, 1.5);
        \draw ( 3.5, 1.5) node[right] {$D_{-4}$};
        
        \draw [->] ( 9, 5) -- ( 11, 7);
        \draw (11,7) node[right] {Type 1};
        \draw [->] ( 9, 5) -- ( 11, 3.5);
        \draw (11,3.5) node[right] {Type 2};
        
        \draw [dashed] ( 12, 10) -- ( 15, 7);
        \draw ( 15, 7) node[right] {$D_0$};
        \draw [dashed] ( 13, 10) -- ( 15.5, 7.5);
        \draw ( 15.5, 7.5) node[right] {$D_{2}$};
        \draw [dashed] ( 14, 10) -- ( 16, 8);
        \draw ( 16, 8) node[right] {$D_{4}$};
        \draw [dashed] ( 12, 9) -- ( 14.5, 6.5);
        \draw ( 14.5, 6.5) node[right] {$D_{-2}$};
        \draw [dashed] ( 12, 8) -- ( 14, 6);
        \draw ( 14, 6) node[right] {$D_{-4}$};
        
        \draw [dashed] ( 12, 3) -- ( 15, 0);
        \draw ( 15, 0) node[right] {$D_0$};
        \draw [dashed] ( 13, 3) -- ( 15.5, .5);
        \draw ( 15.5, .5) node[right] {$D_{2}$};
        \draw [dashed] ( 14, 3) -- ( 16, 1);
        \draw ( 16, 1) node[right] {$D_{4}$};
        \draw [dashed] ( 12, 2) -- ( 14.5, -.5);
        \draw ( 14.5, -.5) node[right] {$D_{-2}$};
        \draw [dashed] ( 12, 1) -- ( 14, -1);
        \draw ( 14, -1) node[right] {$D_{-4}$};
        
        \draw ( 12.5, 9.5) node {2};
        \draw ( 13.5, 9.5) node {2};
        \draw ( 12.5, 8.5) node {3};
        \draw ( 12.5, 7.5) node {4};
        
        \draw ( 12.5, 2.5) node {1};
        \draw ( 13.5, 2.5) node {3};
        \draw ( 14.5, 2.5) node {4};
        \draw ( 12.5, 1.5) node {2};
        \draw ( 13.5, 1.5) node {4};
        
        \draw [->] ( 18, 8) -- ( 19, 8);
        \draw [->] ( 18, 1) -- ( 19, 1);
        
        \node[bsq] (10)   at ( 20.5, 8.5) {2};
        \node[bsq] (11)  [right = of 10] {2};
        \node[bsq] (12)   at ( 20.5, 7.5) {3};
        \node[bsq] (13)  [below = of 12] {4};
        \draw ( 22.5, 8) node {\ $ = t_1$};
        
        \node[bsq] (14)   at ( 20.5, 1.5) {1};
        \node[bsq] (15)  [right = of 14] {3};
        \node[bsq] (16)  [right = of 15] {4};
        \node[bsq] (17)  [below = of 14] {2};
        \node[bsq] (18)  [right = of 17] {4};
        \draw ( 23.5, 1) node {\ $ = t_2$};

\end{tikzpicture}
}
\end{center}

We leave it to the reader to verify that the 2-quotient of $sh(T) = ( 6, 4, 4, 2, 1, 1)$ is $((2,1,1), (3,2))$, the shape of $( t_1, t_2)$.

The inverse algorithm, $\Gamma ^{-1}$, consists of recursively constructing the domino tableau of shape $\lambda$ associated to a pair of Young tableaux $(t_1, t_2)$ of shape $(\mu, \nu)$, where $(\mu, \nu)$ is the 2-quotient of $\lambda$. At any step, we have a pair of Young tableaux $(t_1^{(i)}, t_2^{(i)})$ of shape $(\mu^{(i)}, \nu^{(i)})$ and the associated domino tableau $T^{(i)}$ of shape $\lambda^{(i)}$. We start the algorithm with $\mu^{(0)} = \nu^{(0)} = \lambda^{(0)} = \varnothing$. The algorithm stops when $(t_1^{(s)}, t_2^{(s)}) = (t_1, t_2)$. Then we have that the domino tableau associated to $(t_1, t_2)$ is $T^{(s)}$. We now describe the $i ^{th}$ step of the algorithm. 

Let $u_i$ be the smallest value appearing in $(t_1, t_2)$ that does not appear in $(t_1^{(i-1)}, t_2^{(i-1)})$. 
We build $(t_1^{(i)}, t_2^{(i)})$ of shape $(\mu ^{(i)}, \nu ^{(i)})$ by adding to $(t_1^{(i-1)}, t_2^{(i-1)})$ all cells of $(t_1, t_2)$ with value $u_i$, while preserving their original position. 

To construct the domino tableau $T^{(i)}$ of shape $\lambda^{(i)}$, we use the following procedure for each diagonal, starting with the bottom diagonal: For all cells in $t_1^{(i)}$ (resp. $t_2^{(i)}$) containing the value $u_i$ on diagonal $D_k$, we add to $T^{(i-1)}$ a type 1 domino (resp. a type 2 domino) with entry  $u_i$ on the corresponding diagonal $D_{2k}$. We then get the associated domino tableau $T^{(i)}$ of shape $\lambda^{(i)}$.


Below is an example of $\Gamma^{-1}$ applied to the pair of Young tableaux \[(t_1, t_2) = \left( \raisebox{.15in}{ \ \begin{ytableau} 2 & 2 \\ 3 \\ 4 \end{ytableau}, \ \begin{ytableau} 1 & 3 & 4 \\ 2 & 4 \end{ytableau} \ } \right).\] Notice that we recover the tableau $T$ from the previous example.

\begin{minipage}{.5\textwidth}
\raisebox{.1in}{$(1)$ $\left( \ \varnothing, \ \begin{ytableau}1 \end{ytableau} \ \right) \rightarrow $ }
\resizebox{1cm}{!}{
\begin{tikzpicture}[node distance=0 cm,outer sep = 0pt]
        \tikzstyle{every node}=[font=\LARGE]
        \node[bhor] (1) at ( 1.5, 3) {1};
        
\end{tikzpicture}
}\end{minipage}\begin{minipage}{.5\textwidth}
\raisebox{.2in}{$(2)$ $ \left( \ \begin{ytableau} 2 & 2 \end{ytableau}, \ \begin{ytableau} 1 \\ 2 \end{ytableau} \ \right) \rightarrow $ }
\resizebox{1.5cm}{!}{
\begin{tikzpicture}[node distance=0 cm,outer sep = 0pt]
        \tikzstyle{every node}=[font=\LARGE]
        \node[bhor] (1) at ( 1.5, 1) {1};
        \node[bver] (2) at ( 1, -.5) {2};
        \node[bver] (3) at ( 2, -.5) {2};
        \node[bver] (4) at ( 3, .5) {2};

\end{tikzpicture}
} \end{minipage} 

\begin{minipage}{.5\textwidth}
\raisebox{.4in}{$(3)$ $ \left( \ \begin{ytableau} 2 & 2 \\ 3 \end{ytableau}, \ \begin{ytableau} 1 & 3 \\ 2 \end{ytableau} \ \right) \rightarrow $ }
\resizebox{2cm}{!}{
\begin{tikzpicture}[node distance=0 cm,outer sep = 0pt]
        \tikzstyle{every node}=[font=\LARGE]
        \node[bhor] (1) at ( 1.5, 3) {1};
        \node[bver] (2) at ( 1, 1.5) {2};
        \node[bver] (3) at ( 2, 1.5) {2};
        \node[bver] (4) at ( 3, 2.5) {2};
        \node[bhor] (5) at ( 1.5, 0) {3};
        \node[bver] (6) at ( 4, 2.5) {3};
\end{tikzpicture}
}\end{minipage}\begin{minipage}{.5\textwidth}
\raisebox{.6in}{$(4)$ $ \left( \raisebox{.1in}{ \ \begin{ytableau} 2 & 2 \\ 3 \\ 4 \end{ytableau}, \ \begin{ytableau} 1 & 3 & 4 \\ 2 & 4 \end{ytableau} \ } \right) \rightarrow $ }
\resizebox{3cm}{!}{
\begin{tikzpicture}[node distance=0 cm,outer sep = 0pt]
        \tikzstyle{every node}=[font=\LARGE]
        \node[bhor] (1) at ( 1.5, 3) {1};
        \node[bver] (2) at ( 1, 1.5) {2};
        \node[bver] (3) at ( 2, 1.5) {2};
        \node[bver] (4) at ( 3, 2.5) {2};
        \node[bhor] (5) at ( 1.5, 0) {3};
        \node[bver] (6) at ( 4, 2.5) {3};
        \node[bhor] (7) at ( 5.5, 3) {4};
        \node[bhor] (8) at ( 3.5, 1) {4};
        \node[bver] (9) at ( 1,-1.5) {4};
\end{tikzpicture}
}\end{minipage}

The following is a corollary of Theorem~\ref{thm:bijectiondomino}. It is important to note that this result holds for any pair of partitions $(\mu,\nu)$ because we can reverse the 2-quotient procedure, as illustrated in the example of $\Gamma^{-1}$ above.

\begin{theorem}\label{thm:bijectiondominosym}
Let $\lambda $ be a partition with 2-quotient ($\mu$, $\nu$). Then
\begin{equation*}
s_{\mu} s_{\nu} = \sum _{T} x^T
\end{equation*}
where $T$ runs over the set of domino tableaux of shape $\lambda$.
\end{theorem}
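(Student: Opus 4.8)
The plan is to deduce the identity directly from the weight-preserving bijection $\Gamma$ of Theorem~\ref{thm:bijectiondomino}. First I would make explicit the weight $x^T$ attached to a domino tableau $T$: the exponent of $x_i$ in $x^T$ is the number of dominoes of $T$ carrying the entry $i$ (one factor of $x_i$ per domino, matching the one-cell-per-domino nature of $\Gamma$). With this convention, the right-hand side $\sum_T x^T$ is a well-defined element of $\Lambda$, the sum ranging over all domino tableaux of shape $\lambda$.

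Next I would rewrite the left-hand side using the combinatorial definition of the Schur functions from Section~\ref{sec:Schurfunctions}. Since $s_\mu = \sum_{sh(t_1)=\mu} x^{t_1}$ and $s_\nu = \sum_{sh(t_2)=\nu} x^{t_2}$, distributing the product gives
\[ s_\mu s_\nu = \sum_{(t_1,t_2)} x^{t_1} x^{t_2}, \]
where the sum ranges over all pairs of semistandard Young tableaux $(t_1,t_2)$ with $sh(t_1)=\mu$ and $sh(t_2)=\nu$. By Theorem~\ref{thm:bijectiondomino}, the bijection $\Gamma$ puts such pairs in one-to-one correspondence with the domino tableaux $T$ of shape $\lambda$, so it suffices to check that $\Gamma$ preserves weight, i.e.\ that $x^T = x^{t_1} x^{t_2}$ whenever $\Gamma(T) = (t_1,t_2)$.

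The heart of the argument---and the step I expect to require the most care---is this weight-preservation claim. By construction, $\Gamma$ does not alter any entry: it merely partitions the dominoes of $T$ into the type 1 dominoes, whose entries (read along diagonals) form $t_1$, and the type 2 dominoes, whose entries form $t_2$. Consequently, for each $i$, the number of dominoes of $T$ with entry $i$ equals the number of cells of $t_1$ with entry $i$ plus the number of cells of $t_2$ with entry $i$, which is exactly the statement $x^T = x^{t_1}x^{t_2}$. Combining this with the displayed expansion and reindexing the sum over pairs $(t_1,t_2)$ by the corresponding domino tableaux $T=\Gamma^{-1}(t_1,t_2)$ then yields $s_\mu s_\nu = \sum_T x^T$, as desired. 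The only genuine subtlety is that $\Gamma$ be a bijection onto domino tableaux of shape \emph{exactly} $\lambda$ (and not merely some pavable shape sharing the 2-quotient $(\mu,\nu)$); this is precisely what Theorem~\ref{thm:bijectiondomino} provides, so everything else reduces to bookkeeping on multiplicities.
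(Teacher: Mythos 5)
Your proposal is correct and follows the same route as the paper: expand $s_\mu s_\nu$ over pairs of semistandard tableaux, apply the bijection $\Gamma$ of Theorem~\ref{thm:bijectiondomino}, and observe that the associated domino tableau carries the same multiset of entries, so the weights match. You simply spell out the weight convention $x^T$ for domino tableaux and the bookkeeping more explicitly than the paper does.
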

\begin{proof}
Each term in the product $s_\mu s_\nu$ is represented by a pair of semistandard Young tableaux of shape $(\mu,\nu)$. Theorem~\ref{thm:bijectiondomino} says that these pairs are in bijection with domino tableaux of shape $\lambda$, and the domino tableau associated to a pair $(t_1,t_2)$ has the same multiset of entries as $(t_1,t_2)$.
\end{proof}

\section{$K$-theoretic generalizations}\label{sec:K-theory}
\subsection{Stable Grothendieck polynomials}\label{sec:Grothendieck}
$K$-theory is a generalized cohomology theory. As mentioned in Section~\ref{sec:Schurfunctions}, the Schur functions are deeply connected to the cohomology of the Grassmannian,  and play a very specific role in that cohomology. It turns out that there is a generalization of the Schur functions that plays the same role in the $K$-theory of the Grassmannian; these symmetric functions are called the stable Grothendieck polynomials and are denoted by $G_\lambda$, where $\lambda$ is a partition. We will only discuss the combinatorics of $K$-theory here, and it is not necessary for the reader to have any prior knowledge of the geometry.

Stable Grothendieck polynomials were introduced by Fomin and Kirillov \cite{fomin1996yang} as certain limits of Lascoux and Schutzenberger's Grothendieck polynomials \cite{lascoux1982}. We will give the combinatorial definition first explicitly written in \cite{buch2002littlewood}. The rough idea is that $G_\lambda$ is defined in the same way as $s_\lambda$ except that we are allowed to fill the boxes of $\lambda$ with finite subsets of integers  instead of just single integers. To this end, we first define a partial ordering on subsets.

Let $A$ and $B$ be finite, nonempty sets of positive integers. We say that $A\leq B$ if $\max(A) \leq \min(B)$ and $A< B$ if $\max(A) < \min(B)$. For example, $\{1, 3, 4\}\leq \{4,6,7,33\}$, $\{1,3,4\}<\{5\}$, and $\{1,3,4\}$ is not comparable to $\{2,5,7\}$.

\begin{definition}
Let $\lambda$ be a partition. A \textit{semistandard set-valued tableau of shape $\lambda$} is a filling of the Young diagram $\lambda$ with finite, non-empty sets of positive integers such that
\begin{itemize}
\item entries are weakly increasing from left to right along the rows and
\item entries are strictly increasing down columns.
\end{itemize}
\end{definition}

Given a semistandard set-valued tableau $T$, we may again associate a monomial $x^T$ where 
\[x^T = x_1 ^{\alpha_1 (T)} x_2 ^{\alpha_2 (T)} x_3 ^{\alpha_3 (T)} \cdots,\]
where $\alpha_i (T)$ is the number of occurrences of $i$ in $T$. We let $|T|$ denote the sum of the sizes of the sets that fill $T$ or, equivalently, the total number of integers filling $T$. To illustrate, the tableau $T$ shown below has $x^T=x_1 x_3 ^2 x_4 x_6 x_7 x_9
$ and $|T|=8$. 
\begin{center}
$T =$ 
\ytableausetup{boxsize=.3in}
\begin{ytableau}1,3 & 3 & 6,7 \\ 4 & 5,9
\end{ytableau}
\end{center}
Note that if we pick one representative from each box of a semistandard set-valued tableau of shape $\lambda$, we obtain a semistandard Young tableau of shape $\lambda$.

We can again define the \textit{diagonal reading word} for a set-valued tableau $T$ in a similar way. We will read the entries northwest to southeast along each diagonal, starting with the bottom diagonal. The only difference with the diagonal reading of a semistandard Young tableau is that we use braces to identify the elements of a set that has more than one element. For example, the diagonal reading word for tableau $T$ above is $4\ /\ \{1,3\}, \{5,9\}
\ /\ 3\ /\ \{6,7\}$. Just like for semistandard tableaux, the diagonal reading of a set-valued tableau defines it uniquely.

\begin{definition}\cite{buch2002littlewood} Let $\lambda\vdash n$. The \textit{stable Grothendieck polynomial} $G_\lambda$ is 
\[G_\lambda = \displaystyle\sum_{sh(T)=\lambda}(-1)^{|T|-|\lambda|} x^T,\] where we sum over all semistandard set-valued tableaux $T$ of shape $\lambda$.
\end{definition}

\begin{example}
We have that 
\[G_{(2,1)} = x_1^2x_2+2x_1x_2x_3 - x_1 ^2 x_2 ^2 - 3x_1 ^2 x_2 x_3 + 3x_2 x_6 x_7 ^2 x_8 \pm \ldots,\] where the terms shown correspond to the tableaux below. Note that there are additional tableaux with monomial $x_2 x_6 x_7 ^2 x_8$, so the coefficient of $x_2 x_6 x_7 ^2 x_8$ in the full $G_{(2,1)}$ is greater than 3.

\vspace{.15in}
\begin{center}
\ytableausetup{boxsize=.25in}
\begin{ytableau}\scriptstyle{1} & \scriptstyle{1} \\ \scriptstyle{2}\end{ytableau}
\hspace{.06in}
\begin{ytableau}\scriptstyle{1} & \scriptstyle{3} \\ \scriptstyle{2}\end{ytableau}
\hspace{.06in}
\begin{ytableau}\scriptstyle{1} & \scriptstyle{2} \\ \scriptstyle{3}\end{ytableau}
\hspace{.06in}
\begin{ytableau}\scriptstyle{1} & \scriptstyle{1,2} \\ \scriptstyle{2} \end{ytableau} \hspace{.06in}
\begin{ytableau}\scriptstyle{1} & \scriptstyle{1,2} \\ \scriptstyle{3} \end{ytableau} \hspace{.06in}
\begin{ytableau}\scriptstyle{1} & \scriptstyle{1,3} \\ \scriptstyle{2} \end{ytableau} \hspace{.06in}
\begin{ytableau}\scriptstyle{1} & \scriptstyle{1} \\ \scriptstyle{2,3} \end{ytableau} \hspace{.06in}
\begin{ytableau}\scriptstyle{2,6} & \scriptstyle{7} \\ \scriptstyle{7,8} \end{ytableau} \hspace{.06in}
\begin{ytableau}\scriptstyle{2,6} & \scriptstyle{7,8} \\ \scriptstyle{7} \end{ytableau} \hspace{.06in}
\begin{ytableau}\scriptstyle{2} & \scriptstyle{6,7} \\ \scriptstyle{7,8} \end{ytableau} \hspace{.06in}
\end{center}
\vspace{.15in}

Notice that since the set of semistandard Young tableaux is contained in the set of semistandard set-valued tableaux, $G_\lambda$ will contain $s_\lambda$ as the set of terms of lowest degree. This observation shows us that $G_\lambda$ is a generalization of $s_\lambda$, and, in particular, any formula we have for expressing a product $G_\mu G_\nu$ in terms of stable Grothendieck polynomials will restrict to a formula for $s_\mu s_\nu$ upon restriction to the lowest degree terms. Notice also that the monomials of $G_\lambda$ have arbitrarily large degree. 
\end{example}

\subsection{Set-valued domino tableaux}
In this section, we introduce the notion of a set-valued domino tableau. We will use this object to prove a $K$-theoretic analogue of Theorem~\ref{thm:bijectiondomino}.

If $F$ is a domino filled with subset $A$, let $\max(F)$ and $\min(F)$ denote $\max(A)$ and $\min(A)$, respectively. We say a (square) box of a partition is in position $(i,j)$ if it is in the $i$th row and $j$th column of that partition. Suppose the top left square of domino $F_1$ is in position $(i,j)$ of pavable partition $\lambda$. We say that domino $F_2$ is \textit{weakly southeast} of domino $F_1$ if $F_2$ intersects a box of $\lambda$ in position $(k,\ell)$ with $k\geq i$ and $\ell\geq j$. In other words, at least part of $F_2$ is weakly southeast of the top left square of $F_1$. 


\begin{definition}
A \textit{set-valued domino tableau of shape $\lambda$} is the filling of a domino paving of $\lambda$ with finite, non-empty sets of positive integers such that: 
\begin{enumerate}
\item Restricting to the minimum entry in each domino yields a domino tableau.

\item If $F_1$ and $F_2$ are dominoes of the same type on neighboring diagonals and $F_2$ is weakly southeast of $F_1$, then
\begin{itemize}
\item $\max(F_1)\leq\min(F_2)$ if $F_1$ is located on $D_{2k}$ and $F_2$ is on $D_{2(k+1)}$
\item $\max(F_1) < \min(F_2)$ if $F_1$ is located on $D_{2(k+1)}$ and $F_2$ is on $D_{2k}$.
\end{itemize}
\end{enumerate}
\end{definition}

Another way to state the first condition is that the minimum entries in the dominoes weakly increase from left to right along rows and strictly increase down columns.
 
For $T$ a set-valued domino tableau, we again let $|T|$ denote the total number of positive integers in the filling. We can also define a diagonal reading word  to be the sequence obtained by reading northwest to southeast along each diagonal $D_{2k}$, starting with the bottom diagonal, and separating entries on distinct diagonals with ``/''. We use braces to identify the elements of a set that contains more than one element.

\begin{example}
A set-valued domino tableau of shape $\lambda = (6,5,5,3,1)$ is shown below. Notice, for example, that the entry $\{3,4\}$ appears to the left of the entry $\{3\}$. This is acceptable because the corresponding dominoes have different types, and we therefore must only check that the minimum entries are weakly increasing across rows and strictly increasing down columns.

\begin{center} 
\raisebox{2.5cm}{$T =$} 
\resizebox{5.5cm}{!}{
\begin{tikzpicture}[node distance=0 cm,outer sep = 0pt]
        \tikzstyle{every node}=[font=\Large]
        \node[bhor] (1) at ( 1,   3.5) {1,2};
        \node[bver] (2) at (0.5,    2) {3,4};
        \node[bver] (3) [right = of 2] {3};
        \node[bver] (4) at ( 2.5,   3) {3,7};
        \node[bver] (4) at ( 3.5,   3) {4,6};
        \node[bhor] (5) at (  5,  3.5) {6,8};
        \node[bver] (6) at ( 4.5,   2) {9};
        \node[bhor] (7) at ( 3,   1.5) {7,8,9};
        \node[bhor] (8) at ( 2,   0.5) {10};
        \node[bver] (9) at ( 0.5,   0) {5};
        
        \draw [dashed] ( 0, 4) -- ( 5, -1);
        \draw ( 5, -1) node[right] {$D_0$};
        \draw [dashed] ( 2, 4) -- ( 6, 0);
        \draw ( 6, 0) node[right] {$D_{2}$};
        \draw [dashed] ( 4, 4) -- ( 7, 1);
        \draw ( 7, 1) node[right] {$D_{4}$};
        \draw [dashed] ( 0, 2) -- ( 4, -2);
        \draw ( 4, -2) node[right] {$D_{-2}$};
        \draw [dashed] ( 0, 0) -- ( 3, -3);
        \draw ( 3, -3) node[right] {$D_{-4}$};
\end{tikzpicture}
} 
\end{center}
Its diagonal reading word is 5 / \{3,4\},10 / \{1,2\},3,\{7,8,9\} / \{3,7\},\{4,6\},9 / \{6,8\} and $|T| = 17$.
\end{example}

We may now prove our main result of this section.

\begin{theorem}\label{thm:bijectiondomino2}
Let $\lambda$ be a pavable partition with 2-quotient ($\mu, \nu$). There is a bijection between the set of set-valued domino tableaux of shape $\lambda$ and the set of pairs of semistandard set-valued tableaux of shape $(\mu, \nu)$.
\end{theorem}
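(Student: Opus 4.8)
The plan is to generalize the bijection $\Gamma$ from Theorem~\ref{thm:bijectiondomino} so that it carries finite sets along unchanged, rather than single entries. Given a set-valued domino tableau $T$ of shape $\lambda$, I would define $\Gamma(T) = (t_1, t_2)$ by sending each type 1 domino on diagonal $D_{2k}$ to a box of $t_1$ on diagonal $D_k$ filled with the \emph{same} set, and each type 2 domino on $D_{2k}$ to a box of $t_2$ on $D_k$ with the same set; equivalently, form the diagonal reading word of $T$, split it into the subword coming from type 1 dominoes and the subword coming from type 2 dominoes, and let $t_1, t_2$ be the unique set-valued tableaux with these diagonal reading words. Because condition (1) guarantees that restricting every domino of $T$ to its minimum entry produces an ordinary domino tableau $T_{\min}$, the underlying positions of $\Gamma(T)$ are exactly those produced by the classical bijection applied to $T_{\min}$; in particular $\Gamma(T)$ has shape $(\mu,\nu)$ and the tableaux of minima of $(t_1,t_2)$ are semistandard.

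The heart of the argument is a dictionary between condition (2) and the set-valued semistandard conditions. Placing the tableaux in the plane as usual, a box in position $(i,j)$ lies on diagonal $D_{j-i}$, so horizontally adjacent boxes lie on consecutive diagonals $D_k, D_{k+1}$ (right box higher) and vertically adjacent boxes lie on $D_k, D_{k-1}$ (lower box lower). Under $\Gamma$ a domino on $D_{2k}$ corresponds to a box on $D_k$, so the first bullet of condition (2) ($F_2$ on the higher diagonal, weak inequality) translates precisely to weak increase along rows, and the second bullet ($F_2$ weakly southeast but on the lower diagonal, hence strictly below, strict inequality) translates to strict increase down columns. More precisely, I would show that condition (2) holds for \emph{all} same-type pairs on neighboring diagonals in the weakly-southeast relation if and only if $t_1$ and $t_2$ are set-valued semistandard: the ``only if'' direction gives the row and column conditions as the special case of adjacent boxes, while the ``if'' direction follows by chaining the inequalities along a monotone lattice path (right-then-down, which stays inside the Young diagram because both boxes lie in $\mu$, resp. $\nu$) from one box to the other. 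This establishes that $\Gamma(T)$ is a genuine pair of set-valued semistandard tableaux.

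For the inverse, I would mimic the recursive construction of $\Gamma^{-1}$ from Section~\ref{sec:bijection}, inserting at step $i$ a type 1 (resp. type 2) \emph{set-valued} domino on $D_{2k}$ for each box of $t_1$ (resp. $t_2$) on $D_k$ whose set contains the current smallest unused value, now carrying the entire set rather than a single integer. That the output is a legitimate set-valued domino tableau again splits into the two defining conditions: condition (1) is exactly the statement that $\Gamma^{-1}$ applied to the minima (a pair of ordinary semistandard tableaux) yields a domino tableau, which is Theorem~\ref{thm:bijectiondomino}; condition (2) is the ``if'' direction of the dictionary above. Finally, since both $\Gamma$ and $\Gamma^{-1}$ act as the classical maps on the level of positions (governed entirely by the minima) and leave the set contents untouched, the fact that the classical $\Gamma$ and $\Gamma^{-1}$ are mutually inverse immediately gives that our maps are mutually inverse.

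The step I expect to be the main obstacle is making the dictionary between condition (2) and the semistandard conditions airtight, in particular checking that the weakly-southeast relation on neighboring diagonals captures exactly the row- and column-adjacency relations needed and that no constraint is lost when one passes between ``adjacent boxes'' and ``all weakly-southeast pairs.'' The monotone-path argument handles this, but one must be careful that condition (2) is only imposed between dominoes of the \emph{same} type (matching the fact that $t_1$ and $t_2$ are compared internally but never against each other) and that the two cases of the weakly-southeast relation correctly split into the weak row inequality and the strict column inequality.
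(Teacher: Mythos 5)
Your map is the paper's map $\Gamma^*$, and the overall architecture (split the diagonal reading word by domino type; invert by placing each whole set where $\Gamma^{-1}$ would place its minimum) matches the paper exactly. The gap is in your ``dictionary.'' You treat it as essentially automatic that a box $b_2$ directly right of (or directly below) $b_1$ in $t_i$ corresponds to a domino $F_2$ that is \emph{weakly southeast} of $F_1$, so that condition (2) can be applied to the pair $(F_1,F_2)$ at all; and, in the inverse direction, that a same-type pair with $F_2$ weakly southeast of $F_1$ on the adjacent diagonal corresponds to a box $b_2$ weakly southeast of $b_1$, which is what your monotone-path chaining needs as its starting point. Neither implication is automatic: the box--domino correspondence is by \emph{position in the diagonal reading order}, not by geometric position, so a priori $F_2$ (resp.\ $b_2$) could sit to the northwest of $F_1$ (resp.\ $b_1$) along its diagonal, in which case condition (2) is vacuous for that pair (resp.\ the semistandard inequalities run the wrong way). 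Your monotone-path argument only chains inequalities \emph{after} the relative position is known; it does not establish that position.

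Establishing it is exactly what the paper spends Figures~\ref{fig:domsSE}, \ref{fig:domsstrictSE}, and \ref{fig:cellspositions} on, and the argument runs through the entries rather than pure position: since restricting to minima yields a genuine domino tableau (resp.\ semistandard tableau), one gets $\min(F_1)\leq\min(F_2)$ (strict in the column case), which rules out $F_2$ meeting the northwest region $B$; the requirement that $F_1$ and $F_2$ have the same type rules out $F_2$ lying entirely in region $C$; hence $F_2$ meets region $A$ and is weakly southeast of $F_1$. The symmetric dichotomy of Figure~\ref{fig:cellspositions} handles the inverse direction. You correctly flagged this as the main obstacle, but the tool you propose does not close it; you need to add the regions argument (or some equivalent use of condition (1) and the minima) before the two directions of your dictionary are valid.
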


\begin{proof}
To prove this theorem, we generalize the maps $\Gamma$ and $\Gamma^{-1}$ from the proof of Theorem~\ref{thm:bijectiondomino}. We denote our generalized bijection by $\Gamma^*$ and describe how it maps a set-valued domino tableau of shape $\lambda$ to a pair of 
semistandard set-valued tableaux $(t_1,t_2)$ of shape $(\mu, \nu)$, where $(\mu, \nu)$ is the 2-quotient of $\lambda$. 

Let $T$ be a set-valued domino tableau of shape $\lambda$ and form the diagonal reading word for $T$. Let $w_1$ be the word obtained by restricting this diagonal reading word to the entries that come from type 1 dominoes and $w_2$ be the word obtained by restricting the diagonal reading word for $T$ to entries coming from type 2 dominoes. We then construct $t_1$ to be the unique set-valued tableau with diagonal reading word $w_1$ and $t_2$ to be the unique set-valued tableau with diagonal reading word $w_2$. We illustrate this bijection below using an example.

\begin{center}
\resizebox{16cm}{!}{
\begin{tikzpicture}[node distance=0 cm,outer sep = 0pt]

        \tikzstyle{every node}=[font=\Large]
        
        \draw ( 0, 6) node {$T = $ \ };

        \node[bhor] (1) at ( 1.5, 8) {1,2};
        \node[bver] (2) at ( 1, 6.5) {3,6};
        \node[bver] (3) [right = of 2] {3,4};
        \node[bver] (4) at ( 3, 7.5) {4,7};
        \node[bver] (5) [right = of 4] {4};
        \node[bhor] (6) at ( 5.5, 8) {4,5,6};
        \node[bhor] (8) at ( 1.5, 5) {5};

        \draw [dashed] ( .5, 8.5) -- ( 5.5, 3.5);
        \draw ( 5.5, 3.5) node[right] {$D_0$};
        \draw [dashed] ( 2.5, 8.5) -- ( 6.5, 4.5);
        \draw ( 6.5, 4.5) node[right] {$D_{2}$};
        \draw [dashed] ( 4.5, 8.5) -- ( 7.5, 5.5);
        \draw ( 7.5, 5.5) node[right] {$D_{4}$};
        \draw [dashed] ( .5, 6.5) -- ( 4.5, 2.5);
        \draw ( 4.5, 2.5) node[right] {$D_{-2}$};
        
        \draw [->] ( 9, 5) -- ( 11, 7);
        \draw (11,7) node[right] {Type 1};
        \draw [->] ( 9, 5) -- ( 11, 3.5);
        \draw (11,3.5) node[right] {Type 2};
        
        \draw [dashed] ( 12, 10) -- ( 15, 7);
        \draw ( 15, 7) node[right] {$D_0$};
        \draw [dashed] ( 13, 10) -- ( 15.5, 7.5);
        \draw ( 15.5, 7.5) node[right] {$D_{2}$};
        \draw [dashed] ( 14, 10) -- ( 16, 8);
        \draw ( 16, 8) node[right] {$D_{4}$};
        \draw [dashed] ( 12, 9) -- ( 14.5, 6.5);
        \draw ( 14.5, 6.5) node[right] {$D_{-2}$};
        
        \draw [dashed] ( 12, 3) -- ( 15, 0);
        \draw ( 15, 0) node[right] {$D_0$};
        \draw [dashed] ( 13, 3) -- ( 15.5, .5);
        \draw ( 15.5, .5) node[right] {$D_{2}$};
        \draw [dashed] ( 14, 3) -- ( 16, 1);
        \draw ( 16, 1) node[right] {$D_{4}$};
        \draw [dashed] ( 12, 2) -- ( 14.5, -.5);
        \draw ( 14.5, -.5) node[right] {$D_{-2}$};
        
        \draw ( 12.5, 9.5) node {3,4};
        \draw ( 13.5, 9.5) node {4,7};
        \draw ( 12.5, 8.5) node {5};
        
        \draw ( 12.5, 2.5) node {1,2};
        \draw ( 13.5, 2.5) node {4};
        \draw ( 14.5, 2.5) node {4,5,6};
        \draw ( 12.5, 1.5) node {3,6};
        
        \draw [->] ( 18, 8) -- ( 19, 8);
        \draw [->] ( 18, 1) -- ( 19, 1);
        
        \node[bsq] (10)   at ( 20.5, 8.5) {3,4};
        \node[bsq] (11)  [right = of 10] {4,7};
        \node[bsq] (12)   at ( 20.5, 7.5) {5};
        \draw ( 22.5, 8) node {\ $ = t_1$};
        
        \node[bsq] (14)   at ( 20.5, 1.5) {1,2};
        \node[bsq] (15)  [right = of 14] {4};
        \node[bsq] (16)  [right = of 15] {4,5,6};
        \node[bsq] (17)  [below = of 14] {3,6};
        \draw ( 23.8, 1) node {\ $ = t_2$};

\end{tikzpicture}
}
\end{center}

Let $T$ be a set-valued domino tableau as before, and we show that $\Gamma^*(T)=(t_1,t_2)$ is a pair of semistandard set-valued tableaux. We first show that the sets in $t_1$ and $t_2$ weakly increase along rows. Suppose box $b_1$ lies directly left of box $b_2$ in $t_i$. Now let $F_1$  and $F_2$ be the dominoes of $T$ such that $\Gamma^*$ sends the entries of $F_1$ to $b_1$ and those of $F_2$ to $b_2$. Note then that $F_2$ lies on the diagonal to the right of that of $F_1$. We will show that $F_2$ is weakly southeast of $F_1$.

Since taking the smallest entry in each domino of $T$ gives a domino tableau and $\Gamma^*$ restricts to $\Gamma$ on domino tableaux, we know that $\min(b_1)\leq\min(b_2)$, and so $\min(F_1)\leq\min(F_2)$.  Consider Figure~\ref{fig:domsSE}. The first two images show the case where $F_1$ is type 1 and the next  two show the analogous situation in the case where $F_1$ is type 2. Since $T$ is a set-valued domino tableau and $\min(F_1)\leq\min(F_2)$, $F_2$ cannot intersect region $B$. Also, the image shows that $F_2$ cannot lie completely in region $C$ since it must be the same type as $F_1$. Thus $F_2$ must intersect region $A$ and so is weakly southeast of $F_1$. Since we know $F_2$ lies on the diagonal to the right of the diagonal of $F_1$, this implies $\max(F_1)\leq\min(F_2)$. Hence $\max(b_1)\leq\min(b_2)$, as desired.

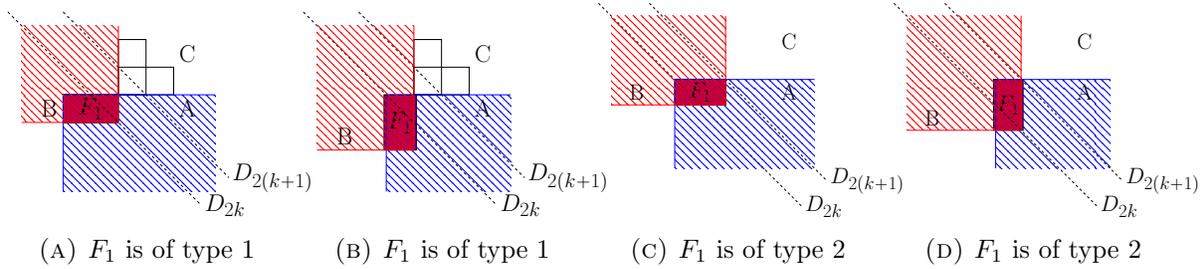
\begin{figure}[h]
\begin{center}
\begin{subfigure}[b]{0.22\linewidth}
\centering
\resizebox{4.2cm}{!}{
\begin{tikzpicture}[node distance=0cm, outer sep=0pt]
     \tikzstyle{every node}=[font=\Huge]
     
     \draw ( 5, 5) node {C};
     
     \node[bhor, fill = purple] (1) at ( 1.5, 3) {$F_1$};
     \node[bver] (4) at ( 3, 4.5) {};
     \node[bhor] (6) at ( 3.5, 4) {};
     
     \draw [thick, dashed] ( -1.5, 6.5) -- ( 5.5, -0.5);
     \draw ( 5.5, -0.5) node[right] {$D_{2k}$};
     \draw [thick, dashed] ( 0.5, 6.5) -- ( 6.5, 0.5);
     \draw ( 6.5, 0.5) node[right] {$D_{2(k+1)}$};
     
     \draw[color=blue] (0.5, 3.5) -- (6, 3.5);
     \draw[color=blue] (0.5, 3.5) -- (0.5, 0);
     \fill[draw=none, pattern=north west lines, pattern color=blue] (0.5,3.5) rectangle (6,0);
     \draw[thick] ( 5, 3) node {A};
     
     \draw[color=red] (-1, 2.5) -- (2.5, 2.5);
     \draw[color=red] (2.5, 6) -- (2.5, 2.5);
     \fill[draw=none, pattern=north west lines, pattern color=red] (-1,6) rectangle (2.5, 2.5);
     \draw[thick] ( 0, 3) node {B};
     
\end{tikzpicture}
}
\caption{$F_1$ is of type 1}
\end{subfigure}
\begin{subfigure}[b]{0.22\linewidth}
\centering
\resizebox{4.2cm}{!}{
\begin{tikzpicture}[node distance=0cm, outer sep=0pt]

     \tikzstyle{every node}=[font=\Huge]
     
     \draw ( 5, 5) node {C};
     
     \node[bver, fill = purple] (1) at ( 2, 2.5) {$F_1$};
     \node[bver] (4) at ( 3, 4.5) {};
     \node[bhor] (6) at ( 3.5, 4) {};
     
     \draw [thick, dashed] ( -1.5, 6.5) -- ( 5.5, -0.5);
     \draw ( 5.5, -0.5) node[right] {$D_{2k}$};
     \draw [thick, dashed] ( 0.5, 6.5) -- ( 6.5, 0.5);
     \draw ( 6.5, 0.5) node[right] {$D_{2(k+1)}$};
     
     \draw[color=blue] (1.5, 3.5) -- (6, 3.5);
     \draw[color=blue] (1.5, 3.5) -- (1.5, 0);
     \fill[draw=none, pattern=north west lines, pattern color=blue] (1.5,3.5) rectangle (6,0);
     \draw[thick] ( 5, 3) node {A};
     
     \draw[color=red] (-1, 1.5) -- (2.5, 1.5);
     \draw[color=red] (2.5, 6) -- (2.5, 1.5);
     \fill[draw=none, pattern=north west lines, pattern color=red] (-1,6) rectangle (2.5, 1.5);
     \draw[thick] ( 0, 2) node {B};
     
\end{tikzpicture}
}
\caption{$F_1$ is of type 1}
\end{subfigure}
\begin{subfigure}[b]{0.22\linewidth}
\centering
\resizebox{4.2cm}{!}{
\begin{tikzpicture}[node distance=0cm, outer sep=0pt]

     \tikzstyle{every node}=[font=\Huge]
     
     \draw ( 5, 6) node {C};
     
     \node[bhor, fill = purple] (1) at ( 1.5, 4) {$F_1$};
     
     \draw [thick, dashed] ( -2.5, 7.5) -- ( 5.5, -0.5);
     \draw ( 5.5, -0.5) node[right] {$D_{2k}$};
     \draw [thick, dashed] ( -0.5, 7.5) -- ( 6.5, 0.5);
     \draw ( 6.5, 0.5) node[right] {$D_{2(k+1)}$};
     
     \draw[color=blue] (0.5, 4.5) -- (6, 4.5);
     \draw[color=blue] (0.5, 4.5) -- (0.5, 1);
     \fill[draw=none, pattern=north west lines, pattern color=blue] (0.5,4.5) rectangle (6,1);
     \draw[thick] ( 5, 4) node {A};
     
     \draw[color=red] (-2, 3.5) -- (2.5, 3.5);
     \draw[color=red] (2.5, 7) -- (2.5, 3.5);
     \fill[draw=none, pattern=north west lines, pattern color=red] (-2,7) rectangle (2.5, 3.5);
     \draw[thick] ( -1, 4) node {B};
     
\end{tikzpicture}
}
\caption{$F_1$ is of type 2}
\end{subfigure}
\begin{subfigure}[b]{0.22\linewidth}
\centering
\resizebox{4.2cm}{!}{
\begin{tikzpicture}[node distance=0cm, outer sep=0pt]

     \tikzstyle{every node}=[font=\Huge]
     
     \draw ( 5, 6) node {C};
     
     \node[bver, fill = purple] (1) at ( 2, 3.5) {$F_1$};
     
     \draw [thick, dashed] ( -2.5, 7.5) -- ( 5.5, -0.5);
     \draw ( 5.5, -0.5) node[right] {$D_{2k}$};
     \draw [thick, dashed] ( -0.5, 7.5) -- ( 6.5, 0.5);
     \draw ( 6.5, 0.5) node[right] {$D_{2(k+1)}$};
     
     \draw[color=blue] (1.5, 4.5) -- (6, 4.5);
     \draw[color=blue] (1.5, 4.5) -- (1.5, 1);
     \fill[draw=none, pattern=north west lines, pattern color=blue] (1.5,4.5) rectangle (6,1);
     \draw[thick] ( 5, 4) node {A};
     
     \draw[color=red] (-2, 2.5) -- (2.5, 2.5);
     \draw[color=red] (2.5, 7) -- (2.5, 2.5);
     \fill[draw=none, pattern=north west lines, pattern color=red] (-2,7) rectangle (2.5, 2.5);
     \draw[thick] ( -1, 3) node {B};
     
\end{tikzpicture}
}
\caption{$F_1$ is of type 2}
\end{subfigure}
\caption{$F_2$ is southeast of $F_1$ when $b_1$ is left of $b_2$ in $t_i$.}
\label{fig:domsSE}
\end{center}
\end{figure}

We next show that the entries of $t_1$ and $t_2$ strictly increase down columns. Suppose box $b_1$ lies directly above box $b_2$ in $t_i$ and let $F_1$ and $F_2$ be as before. Note that $F_2$ lies on the diagonal directly left of that of $F_1$. Since taking the smallest entry in each domino of $T$ gives a domino tableau and $\Gamma^*$ restricts to $\Gamma$ on domino tableaux, we know that $\min(b_1)<\min(b_2)$ and so $\min(F_1)<\min(F_2)$. Consider Figure~\ref{fig:domsstrictSE}. Since $T$ is a set-valued domino tableau and $\min(F_1)<\min(F_2)$, $F_2$ cannot intersect the region $B$. We can also see that $F_2$ cannot be completely contained in region $C$, and hence $F_2$ intersects region $A$ and is weakly southeast of $F_1.$ Since $F_2$ lies on the diagonal to the left of that of $F_1$, then $\max(F_1)<\min(F_2)$. We conclude  that $\max(b_1)<\min(b_2)$, as desired.

\begin{figure}[h]
\begin{center}
\begin{subfigure}[b]{0.22\linewidth}
\centering
\resizebox{4.2cm}{!}{
\begin{tikzpicture}[node distance=0cm, outer sep=0pt]

     \tikzstyle{every node}=[font=\Huge]
     
     \draw ( -1, 1) node {C};
     
     \node[bhor, fill = purple] (1) at ( 1.5, 3) {$F_1$};
     
     \draw [thick, dashed] ( -1.5, 6.5) -- ( 5.5, -0.5);
     \draw ( 5.5, -0.5) node[right] {$D_{2k}$};
     \draw [thick, dashed] ( -3.5, 6.5) -- ( 3.5, -0.5);
     \draw ( 3.5, -1.5) node[right] {$D_{2(k-1)}$};
     
     \draw[color=blue] (0.5, 3.5) -- (5, 3.5);
     \draw[color=blue] (0.5, 3.5) -- (0.5, 0);
     \fill[draw=none, pattern= north west lines, pattern color=blue] (0.5,3.5) rectangle (5,0);
     \draw[thick] ( 4, 3) node {A};
     
     \draw[color=red] (-2, 2.5) -- (2.5, 2.5);
     \draw[color=red] (2.5, 6) -- (2.5, 2.5);
     \fill[draw=none, pattern=north west lines, pattern color=red] (-2,6) rectangle (2.5, 2.5);
     \draw[thick] ( 2, 5) node {B};
     
\end{tikzpicture}
}
\caption{$F_1$ is of type 1}
\end{subfigure}
\begin{subfigure}[b]{0.22\linewidth}
\centering
\resizebox{4.2cm}{!}{
\begin{tikzpicture}[node distance=0cm, outer sep=0pt]

     \tikzstyle{every node}=[font=\Huge]
     
     \draw ( -1, 0.5) node {C};
     
     \node[bver, fill = purple] (1) at ( 2, 2.5) {$F_1$};
     
     \draw [thick, dashed] ( -1.5, 6.5) -- ( 5.5, -0.5);
     \draw ( 5.5, -0.5) node[right] {$D_{2k}$};
     \draw [thick, dashed] ( -3.5, 6.5) -- ( 3.5, -0.5);
     \draw ( 3.5, -1.5) node[right] {$D_{2(k-1)}$};
     
     \draw[color=blue] (1.5, 3.5) -- (5, 3.5);
     \draw[color=blue] (1.5, 3.5) -- (1.5, 0);
     \fill[draw=none, pattern=north west lines, pattern color=blue] (1.5,3.5) rectangle (5,0);
     \draw[thick] ( 4, 3) node {A};
     
     \draw[color=red] (-2, 1.5) -- (2.5, 1.5);
     \draw[color=red] (2.5, 6) -- (2.5, 1.5);
     \fill[draw=none, pattern=north west lines, pattern color=red] (-2,6) rectangle (2.5, 1.5);
     \draw[thick] ( 2, 5) node {B};
     
\end{tikzpicture}
}
\caption{$F_1$ is of type 1}
\end{subfigure}
\begin{subfigure}[b]{0.22\linewidth}
\centering
\resizebox{4.2cm}{!}{
\begin{tikzpicture}[node distance=0cm, outer sep=0pt]

     \tikzstyle{every node}=[font=\Huge]
     
     \draw ( -2, 2) node {C};
     
     \node[bhor, fill = purple] (1) at ( 1.5, 4) {$F_1$};
     \node[bver] (4) at ( 0, 2.5) {};
     \node[bhor] (6) at ( -0.5, 3) {};
     
     \draw [thick, dashed] ( -1.5, 6.5) -- ( 5.5, -0.5);
     \draw ( 5.5, -0.5) node[right] {$D_{2k}$};
     \draw [thick, dashed] ( -3.5, 6.5) -- ( 3.5, -0.5);
     \draw ( 3.5, -1.5) node[right] {$D_{2(k-1)}$};
     
     \draw[color=blue] (0.5, 4.5) -- (5, 4.5);
     \draw[color=blue] (0.5, 4.5) -- (0.5, 1);
     \fill[draw=none, pattern=north west lines, pattern color=blue] (0.5,4.5) rectangle (5,1);
     \draw[thick] ( 4, 4) node {A};
     
     \draw[color=red] (-3, 3.5) -- (2.5, 3.5);
     \draw[color=red] (2.5, 6) -- (2.5, 3.5);
     \fill[draw=none, pattern=north west lines, pattern color=red] (-3,6) rectangle (2.5, 3.5);
     \draw[thick] ( 2, 5) node {B};
     
\end{tikzpicture}
}
\caption{$F_1$ is of type 2}
\end{subfigure}
\begin{subfigure}[b]{0.22\linewidth}
\centering
\resizebox{4.2cm}{!}{
\begin{tikzpicture}[node distance=0cm, outer sep=0pt]

     \tikzstyle{every node}=[font=\Huge]
     
     \draw ( -1, 1) node {C};
     
     \node[bver, fill = purple] (1) at ( 2, 3.5) {$F_1$};
     \node[bver] (4) at ( 1, 1.5) {};
     \node[bhor] (6) at ( 0.5, 2) {};
     
     \draw [thick, dashed] ( -1.5, 6.5) -- ( 5.5, -0.5);
     \draw ( 5.5, -0.5) node[right] {$D_{2k}$};
     \draw [thick, dashed] ( -3.5, 6.5) -- ( 3.5, -0.5);
     \draw ( 3.5, -1.5) node[right] {$D_{2(k-1)}$};
     
     \draw[color=blue] (1.5, 4.5) -- (5, 4.5);
     \draw[color=blue] (1.5, 4.5) -- (1.5, 0);
     \fill[draw=none, pattern=north west lines, pattern color=blue] (1.5,4.5) rectangle (5,0);
     \draw[thick] ( 4, 4) node {A};
     
     \draw[color=red] (-2, 2.5) -- (2.5, 2.5);
     \draw[color=red] (2.5, 6) -- (2.5, 2.5);
     \fill[draw=none, pattern=north west lines, pattern color=red] (-2,6) rectangle (2.5, 2.5);
     \draw[thick] ( 2, 5) node {B};
     
\end{tikzpicture}
}
\caption{$F_1$ is of type 2}
\end{subfigure}
\caption{$F_2$ is southeast of $F_1$ when $b_1$ is over $b_2$ in $t_i$.}
\label{fig:domsstrictSE}
\end{center}
\end{figure}
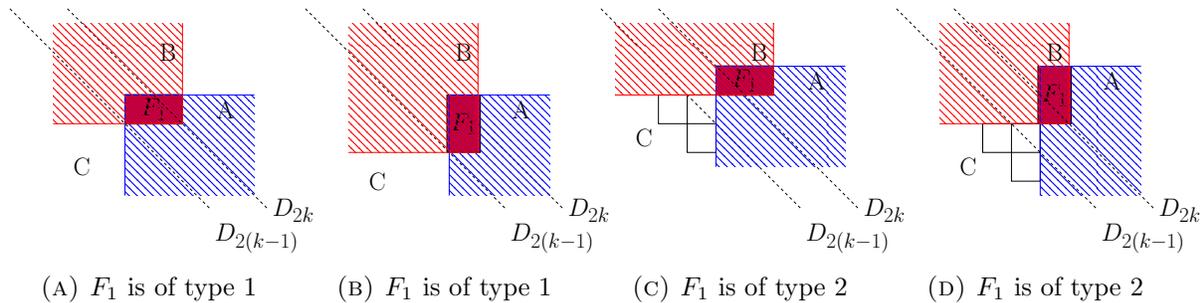

Note that we know that $(t_1,t_2)$ has shape $(\mu,\nu)$ by restricting $\Gamma^*$ to the set of domino tableaux of shape $\lambda$. We conclude that $\Gamma^*$ sends a set-valued domino tableau of shape $\lambda$ to a pair of semistandard set-valued tableaux of shape $(\mu, \nu)$, where $(\mu, \nu)$ is the 2-quotient of $\lambda$.

We will now describe $\Gamma^{*-1}$, the inverse map of $\Gamma^{*}$. Let $(t_1',t_2')$ be the semistandard tableaux obtained from $(t_1,t_2)$ by taking only the smallest entry in each box. We may then apply $\Gamma^{-1}$ to $(t_1',t_2')$ to obtain a domino tableau $T'$. We then define $\Gamma^{*-1}(t_1,t_2)$ to be the set-valued domino tableau obtained from $T'$ by reuniting each entry in $T'$ with the rest of the subset that was with that entry in $(t_1,t_2)$. We describe this precisely below. 

Let $(t_1, t_2)$ be a pair of semistandard set-valued tableaux of shape $(\mu, \nu)$. Similarly to the description of $\Gamma ^{-1}$, we recursively construct the set-valued domino tableau of shape $\lambda$ associated to $(t_1, t_2)$. At any step, we have a pair of set-valued tableaux $(t_1^{(i)}, t_2^{(i)})$ of shape $(\mu^{(i)}, \nu^{(i)})$ and the associated set-valued domino tableau $T^{(i)}$ of shape $\lambda^{(i)}$. We start the algorithm with $\mu^{(0)} = \nu^{(0)} = \lambda^{(0)} = \varnothing$. The algorithm stops when $(t_1^{(s)}, t_2^{(s)}) = (t_1, t_2)$. Then we have that the set-valued domino tableau associated to $(t_1, t_2)$ is $T^{(s)}$. 

We now describe the $i ^{th}$ step of the algorithm. Let $u_i$ be the smallest value appearing as the minimum entry in a box in $(t_1, t_2)$ that does not appear as the minimum entry in a box in $(t_1^{(i-1)}, t_2^{(i-1)})$. 
We build $(t_1^{(i)}, t_2^{(i)})$ of shape $(\mu ^{(i)}, \nu ^{(i)})$ by adding to $(t_1^{(i-1)}, t_2^{(i-1)})$ all cells of $(t_1, t_2)$ filled by a set with minimum $u_i$, while preserving their original position. 

To construct the domino tableau $T^{(i)}$ of shape $\lambda^{(i)}$, we use the following procedure for each diagonal, starting with the bottom diagonal:
For any cell in $t_1$ (resp. $t_2$) filled with a set with minimum $u_i$ on diagonal $D_k$, we add to $T^{(i-1)}$ a type 1 domino (resp. a type 2 domino) filled with that set on the corresponding diagonal $D_{2k}$. We then get the associated domino tableau $T^{(i)}$ of shape $\lambda^{(i)}$.

Below is an example of $\Gamma^{*-1}$ applied to the pair of set-valued tableaux \[(t_1, t_2) = \left( \raisebox{.1in}{ \ \begin{ytableau} \scriptstyle{3,4} & \scriptstyle{4,7} \\ \scriptstyle{5} \end{ytableau}, \ \begin{ytableau} \scriptstyle{1,2} & \scriptstyle{4} & \scriptstyle{4,5,6} \\ \scriptstyle{3,6 }\end{ytableau} \ } \right).\] Notice that we recover the tableau $T$ from the previous example.

\begin{minipage}{.53\textwidth}
\raisebox{.07in}{$(1)$ $\left( \ \varnothing, \ \begin{ytableau}\scriptstyle{1,2} \end{ytableau} \ \right) \rightarrow $ }
\resizebox{1.2 cm}{!}{
\begin{tikzpicture}[node distance=0 cm,outer sep = 0pt]
        \tikzstyle{every node}=[font=\Large]
        \node[bhor] (1) at ( 1.5, 3) {1,2};
\end{tikzpicture}
}\end{minipage} 
\begin{minipage}{.53\textwidth}
\raisebox{.28in}{$(2)$ $ \left( \ \begin{ytableau} \scriptstyle{3,4} \end{ytableau} , \ \begin{ytableau} \scriptstyle{1,2} \\ \scriptstyle{3,6} \end{ytableau} \ \right) \rightarrow $ }
\resizebox{1.2cm}{!}{
\begin{tikzpicture}[node distance=0 cm,outer sep = 0pt]
        \tikzstyle{every node}=[font=\Large]
        \node[bhor] (1) at ( 1.5, 1) {1,2};
        \node[bver] (2) at ( 1, -.5) {3,6};
        \node[bver] (3) at ( 2, -.5) {3,4};
\end{tikzpicture}
}\end{minipage}
\begin{center}

\begin{minipage}{.53\textwidth}
\raisebox{.28in}{$(3)$ $ \left( \ \begin{ytableau} \scriptstyle{3,4} & \scriptstyle{4,7} \end{ytableau}, \ \begin{ytableau} \scriptstyle{1,2} & \scriptstyle{4} & \scriptstyle{4,5,6} \\ \scriptstyle{3,6} \end{ytableau} \ \right) \rightarrow $ }
\resizebox{3.3cm}{!}{
\begin{tikzpicture}[node distance=0 cm,outer sep = 0pt]
        \tikzstyle{every node}=[font=\Large]
        \node[bhor] (1) at ( 1.5, 1) {1,2};
        \node[bver] (2) at ( 1, -.5) {3,6};
        \node[bver] (3) at ( 2, -.5) {3,4};
        \node[bver] (4) at ( 3, .5)  {4,7};
        \node[bver] (6) at ( 4, .5)  {4};
        \node[bhor] (7) at ( 5.5, 1) {4,5,6};
\end{tikzpicture}}
\end{minipage}

\begin{minipage}{.53\textwidth}
\raisebox{.4in}{$(4)$ $ \left( \ \begin{ytableau} \scriptstyle{3,4} & \scriptstyle{4,7} \\ \scriptstyle{5} \end{ytableau}, \ \begin{ytableau} \scriptstyle{1,2} & \scriptstyle{4} & \scriptstyle{4,5,6} \\ \scriptstyle{3,6} \end{ytableau} \ \right) \rightarrow $ } 
\resizebox{3.3cm}{!}{
\begin{tikzpicture}[node distance=0 cm,outer sep = 0pt]
        \tikzstyle{every node}=[font=\Large]
        \node[bhor] (1) at ( 1.5, 3) {1,2};
        \node[bver] (2) at ( 1, 1.5) {3,6};
        \node[bver] (3) at ( 2, 1.5) {3,4};
        \node[bver] (4) at ( 3, 2.5) {4,7};
        \node[bhor] (5) at ( 1.5, 0) {5};
        \node[bver] (6) at ( 4, 2.5) {4};
        \node[bhor] (7) at ( 5.5, 3) {4,5,6};
\end{tikzpicture}
}\end{minipage}
\end{center}

We need to see that $\Gamma^{*-1}$ gives a set-valued domino tableau. Since $\Gamma^{-1}$ and $\Gamma^{*-1}$ coincide on pairs of semistandard tableaux and give a domino tableau, we see that the minimum entries of the dominoes increase weakly along rows and strictly down columns. Also, $T = \Gamma^{*-1} (t_1, t_2)$ has shape $\lambda$, since $\Gamma^{-1} (t_1', t_2')$ does.

Suppose $F_1$ and $F_2$ are dominoes of  the same type of $T = \Gamma^{*-1} (t_1, t_2)$, that $F_1$ is on diagonal $D_{2k}$ for some $k$, and that $F_2$ is on diagonal $D_{2(k+1)}$ and is weakly southeast of $F_1$. We must show that $\max(F_1)\leq\min(F_2)$.

Let $b_1$ and $b_2$ be the boxes of $t_i$ such that $\Gamma^{*-1}$ sends the entries of $b_1$ to $F_1$ and the entries of $b_2$ to $F_2$. Then $b_1$ lies on diagonal $D_{k}$ of $t_i$ and $b_2$ lies on diagonal $D_{k+1}$. Then $b_2$ is either weakly southeast of $b_1$ or is weakly northwest of $b_1$ as shown in the first image of Figure~\ref{fig:cellspositions}. However, since $\Gamma^{*-1}$ restricts to $\Gamma^{-1}$ on semistandard tableaux and gives a domino tableau, we know that $\min(F_1)\leq\min(F_2)$. It follows that $\min(b_1)\leq\min(b_2)$, and so $b_2$ must be weakly southeast of $b_1$. Thus $\max(b_1)\leq\min(b_2)$, which implies that $\max(F_1)\leq\min(F_2)$.

Lastly, suppose that $F_1$ and $F_2$ are dominoes of the same type of $T = \Gamma^{*-1} (t_1, t_2)$, that $F_1$ is on diagonal $D_{2(k+1)}$ for some $k$, and that $F_2$ is on diagonal $D_{2k}$ and is weakly southeast of $F_1$. We must show that $\max(F_1)<\min(F_2)$.

Let $b_1$ and $b_2$ be as before, so $b_1$ lies on diagonal $D_{k+1}$ of $t_i$ and $b_2$ lies on diagonal $D_{k}$. From the second image of Figure~\ref{fig:cellspositions}, we see that either $b_2$ is weakly southeast of $b_1$ or $b_2$ is weakly northwest of $b_1$. For the same reason as in the previous argument, we know that $\min(F_1)<\min(F_2)$, so $\min(b_1)<\min(b_2)$. This means that $b_2$ must lie weakly southeast of $b_1$, and so $\max(b_1)<\min(b_2)$. We then have that $\max(F_1)<\min(F_2)$, as desired.

\begin{figure}[h]
\resizebox{8cm}{!}{
\begin{tikzpicture}[node distance=0 cm,outer sep = 0pt]
        
        \tikzstyle{every node}=[font=\Huge]
        \node[bsq] (1) at ( 1.5, 3) {$b_1$};
        \node[bsq, fill=blue] (2) at ( 2.5, 3) {$b_2$};
        \node[bsq, fill=blue] (3) at ( 3.5, 2) {$b_2$};
        \node[bsq, fill=blue] (4) at ( 4.5, 1) {$b_2$};
        \node[bsq, fill=red] (5) at ( 1.5, 4) {$b_2$};
        \node[bsq, fill=red] (6) at ( 0.5, 5) {$b_2$};
        \node[bsq] (6) at ( 0.5, 4) {};
        \node[bsq] (7) at ( 2.5, 2) {};
        \node[bsq] (8) at ( 3.5, 1) {};
        \node[bsq] (9) at ( -0.5, 5) {};
        
        \draw [dashed, thick] ( -1.5, 6) -- ( 5.5, -1);
        \draw ( 5.5, -2) node[right] {$D_{k}$};
        \draw [dashed, thick] ( -0.5, 6) -- ( 6.5, -1);
        \draw ( 6.5, -1) node[right] {$D_{k+1}$};
        

\end{tikzpicture}
\begin{tikzpicture}[node distance=0 cm,outer sep = 0pt]
        
       \tikzstyle{every node}=[font=\Huge] 
       \node[bsq, fill=red] (1) at ( 1.5, 3) {$b_2$};
        \node[bsq] (2) at ( 2.5, 3) {$b_1$};
        \node[bsq] (3) at ( 3.5, 2) {};
        \node[bsq] (4) at ( 4.5, 1) {};
        \node[bsq] (5) at ( 1.5, 4) {};
        \node[bsq] (6) at ( 0.5, 5) {};
        \node[bsq, fill=red] (6) at ( 0.5, 4) {$b_2$};
        \node[bsq, fill=blue] (7) at ( 2.5, 2) {$b_2$};
        \node[bsq, fill=blue] (8) at ( 3.5, 1) {$b_2$};
        \node[bsq, fill=red] (9) at ( -0.5, 5) {$b_2$};
        
        \draw [dashed, thick] ( -1.5, 6) -- ( 5.5, -1);
        \draw ( 5.5, -2) node[right] {$D_{k}$};
        \draw [dashed, thick] ( -0.5, 6) -- ( 6.5, -1);
        \draw ( 6.5, -1) node[right] {$D_{k+1}$};
        

\end{tikzpicture}
}
\caption{Possible relative positions of $b_1$ and $b_2$: case 1 is shown in red, where $\max(b_2)\leq \min(b_1)$ and case 2 in blue, where $\max(b_1)\leq \min(b_2)$.}
\label{fig:cellspositions}
\end{figure}
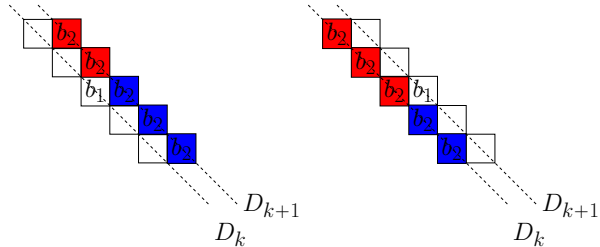 

We conclude that $\Gamma^{*-1}$ sends a pair of semistandard set-valued tableaux of shape $(\mu, \nu)$ to a set-valued domino tableau of shape $\lambda$. It is clear that $\Gamma^{*-1}$ and $\Gamma^*$ are indeed inverses as they are governed by $\Gamma^{-1}$ and $\Gamma$.

\end{proof}

The translation into the language of symmetric functions gives us the following theorem. Note that for pavable partition $\lambda$, $\frac{1}{2}|\lambda|$ gives the number of dominoes in a paving of $\lambda$.

\begin{corollary}
Let $\lambda$ be a partition with 2-quotient ($\mu$, $\nu$). Then 
\[
G_{\mu} G_{\nu} = \sum _{T} (-1)^{|T|-{\frac{1}{2}|\lambda|}} x^T,\]
where we sum over all set-valued domino tableaux of shape $\lambda$.
\end{corollary}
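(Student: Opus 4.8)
The plan is to deduce the corollary from the bijection $\Gamma^*$ of Theorem~\ref{thm:bijectiondomino2} in exactly the way Theorem~\ref{thm:bijectiondominosym} was deduced from Theorem~\ref{thm:bijectiondomino}; the only work beyond the non-$K$-theoretic case is keeping track of signs. First I would expand the left-hand side using the combinatorial definition of the stable Grothendieck polynomials:
\[
G_\mu G_\nu = \sum_{(t_1,t_2)} (-1)^{(|t_1|-|\mu|)+(|t_2|-|\nu|)}\, x^{t_1} x^{t_2},
\]
where the sum runs over all pairs $(t_1,t_2)$ of semistandard set-valued tableaux with $sh(t_1)=\mu$ and $sh(t_2)=\nu$.

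Next I would apply the bijection $\Gamma^*$, which by Theorem~\ref{thm:bijectiondomino2} identifies these pairs with the set-valued domino tableaux $T$ of shape $\lambda$. Since $\Gamma^*$ is defined by sorting the entries of the diagonal reading word of $T$ into the words $w_1$ and $w_2$ and reassembling them into $t_1$ and $t_2$, it merely redistributes the integers of $T$ without creating or destroying any; hence the multiset of entries of $T$ is the disjoint union of the multisets of entries of $t_1$ and $t_2$. This gives immediately
\[
x^{t_1} x^{t_2} = x^{T} \qquad \text{and} \qquad |t_1| + |t_2| = |T|.
\]

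It then remains to match the sign. I would invoke the observation recorded just before the corollary that $\tfrac{1}{2}|\lambda|$ equals the number of dominoes in any paving of $\lambda$, together with the fact that under $\Gamma^*$ each box of the pair $(\mu,\nu)$ corresponds to exactly one domino of $T$; thus $|\mu|+|\nu| = \tfrac{1}{2}|\lambda|$. Combining this with the entry-count identity above yields
\[
(|t_1|-|\mu|)+(|t_2|-|\nu|) = |T| - \tfrac{1}{2}|\lambda|,
\]
so the exponent of $-1$ on each term agrees with the claimed formula, and summing over all $T$ completes the argument.

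I expect no serious obstacle here: the genuinely delicate content is already contained in the proof that $\Gamma^*$ is a well-defined bijection preserving the multiset of entries (Theorem~\ref{thm:bijectiondomino2}), so the remaining steps are the routine translation into generating functions together with the sign bookkeeping. The single point one must not overlook is the identity $|\mu|+|\nu| = \tfrac{1}{2}|\lambda|$, which is exactly what makes the normalization $|\lambda|$ appearing in the definition of $G_\lambda$ turn into the factor $\tfrac{1}{2}|\lambda|$ in the domino sum.
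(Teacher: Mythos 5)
Your proposal is correct and follows essentially the same argument as the paper: apply the bijection $\Gamma^*$ of Theorem~\ref{thm:bijectiondomino2} termwise, note that $x^{t_1}x^{t_2}=x^T$, and match the signs via $|t_1|+|t_2|=|T|$ and $|\mu|+|\nu|=\tfrac{1}{2}|\lambda|$. The only difference is that you spell out the sign bookkeeping slightly more explicitly than the paper does, which is fine.
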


\begin{proof}
Consider a term in the product $G_\mu G_\nu$. This monomial corresponds to a pair of semistandard set-valued tableaux: $t_1$ of shape $\mu$ and $t_2$ of shape $\nu$. The pair $(t_1,t_2)$ corresponds to some set-valued domino tableau $T$ of shape $\lambda$ by Theorem~\ref{thm:bijectiondomino2}. It is clear from the previous bijection that $x^{t_1} x^{t_2} = x^T$.

We now examine the sign of $x^{t_1}x^{t_2}$ in $G_\mu G_\nu$. We see that it appears with sign 
\[(-1)^{|t_1|-|\mu|}(-1)^{|t_2|-|\nu|}=(-1)^{|t_1|+|t_2|-(|\mu|+|\nu|)}=(-1)^{|T|-\frac{1}{2}|\lambda|}.\] This gives the desired result.
\end{proof}

\section{$Q$-Schur functions and shifted domino tableaux}\label{sec:shifted}
\subsection{$Q$-Schur functions}

Let $\lambda$ be a partition. Define $up(\lambda)$ to be the boxes of $\lambda$ that lie on a diagonal weakly northeast of $D_0$ and $down(\lambda)$ to be the boxes of $\lambda$ that lie on a diagonal strictly southwest of $D_0$.

Let $\lambda = (\lambda_1, \lambda_2, \ldots \lambda_k)$ be such that $\lambda_k \geqslant k$. We may form a \textit{shifted Young tableau} of shape $\lambda$ by filling the boxes of $down(\lambda)$ with the symbol $X$ and filling the boxes of $up(\lambda)$ with primed and unprimed positive integers with linear order $1'<1<2'<2<\ldots$ such that

\begin{itemize}
\item rows and columns are weakly increasing,
\item there is at most one occurrence of $i'$ in any row and
\item there is at most one occurrence of $i$ in any column.
\end{itemize}
For example, both tableaux below are shifted Young tableaux.

\begin{center}
\ytableausetup{boxsize=.2in}
\begin{ytableau}
1' & 1 & 2' \\
X & 2 & 4
\end{ytableau}\hspace{1in}
\begin{ytableau}
2 & 3' & 3 & 3 \\
X & 3' & 4 \\
X & X & 6
\end{ytableau}
\end{center}

We may associate a monomial to a shifted Young tableau $T$ by defining 
\[x^T=x_1^{\beta_1(T)}x_2^{\beta_2(T)}\cdots,\]
where $\beta_i(T)$ is the number of occurrences of $i$ and $i'$ in $T$. The tableau above on the right corresponds to monomial $x_2x_3^4x_4x_6$.

The \textit{$Q$-Schur function} indexed by partition $\lambda=(\lambda_1,\ldots,\lambda_k)$ with $\lambda_k\geq k$, denoted $Q_\lambda$, is then defined to be the weighted generating function over all shifted Young tableaux of shape $\lambda$:
\[Q_\lambda= \sum_{sh(T)=\lambda} x^T.\]

The $Q$-Schur functions were introduced by I. Schur in relation to the projective representations of the symmetric and alternating groups \cite{schur1911darstellung}. They have since been widely studied, for example by B. Sagan \cite{sagan1987shifted} and J. Stembridge \cite{stembridge1989shifted}.

Below are a few terms of $Q_{(3,3,3)}$ and the corresponding shifted tableaux. Note that each term shown appears with multiplicity in the full $Q_{(3,3,3)}$. For example, $x_1^3x_2^2x_3$ appears with coefficient 8 since each element on the diagonal of the tableau shown on the left may be primed or unprimed. 

\[Q_{(3,3,3)} = x_1^{3} x_2^2 x_3 + x_1^{3} x_2^{2} x_4 + x_1^{3} x_2^{2} x_5 + x_1 x_2^{2} x_3^2 x_4 + x_1 x_2^{3} x_3^2 + \ldots\]

\begin{center}
\ytableausetup{boxsize=.2in}
\begin{ytableau}
1' & 1 & 1 \\
X & 2' & 2 \\
X & X & 3'
\end{ytableau}\hspace{.2in}
\begin{ytableau}
1' & 1 & 1 \\
X & 2 & 2 \\
X & X & 4
\end{ytableau}\hspace{.2in}
\begin{ytableau}
1' & 1 & 1 \\
X & 2' & 2 \\
X & X & 5'
\end{ytableau}\hspace{.2in}
\begin{ytableau}
1' & 2' & 3' \\
X & 2' & 3' \\
X & X & 4
\end{ytableau}\hspace{.2in}
\begin{ytableau}
1' & 2' & 2 \\
X & 2' & 3' \\
X & X & 3
\end{ytableau}
\end{center}

\subsection{Shifted domino tableaux}
We next define the notion of a shifted domino tableau, which was first introduced by Z. Chemli \cite{chemli}. 

\begin{definition}\cite{chemli}\label{def:shiftedpaving} Let $\lambda$ be a pavable partition with 2-quotient $(\mu=(\mu_1,\ldots,\mu_s),\nu=(\nu_1,\ldots,\nu_t))$ and fixed paving. This paving is a \textit{shifted paving} if 
\begin{itemize}
\item $\mu_s\geq s$ and $\nu_t\geq t$ and 
\item there is no vertical domino $d$ on $D_0$ such that the dominoes directly left of $d$ and adjacent to $d$ are all strictly below $D_0$. 
\end{itemize}
If such a paving of $\lambda$ exists, we call $\lambda$ a \textit{shifted pavable partition}.
\end{definition}
For a shifted pavable partition $\lambda$ with fixed shifted paving, define $up(\lambda)$ to be the dominoes of $\lambda$ that lie on a diagonal weakly northeast of $D_0$ and $down(\lambda)$ to be the dominoes of $\lambda$ that lie on a diagonal strictly southwest of $D_0$. 

\begin{definition}\cite{chemli}
Given a shifted pavable partition $\lambda$ with fixed shifted paving, a \textit{shifted domino tableau} is a filling of the dominoes of $down(\lambda)$ with $X$ and the dominoes of $up(\lambda)$ with primed and unprimed integers with linear order $1'<1<2'<2<\ldots$ such that 
\begin{itemize}
\item rows and columns are weakly increasing,
\item there is at most one occurrence of $i$ in any column, and 
\item there is at most one occurrence of $i'$ in any row.
\end{itemize}

For $T$ a shifted domino tableau, let $up(T)$ be the dominoes of $T$ that lie on a diagonal weakly northeast of $D_0$ along with the filling of these dominoes. We consider two shifted domino tableaux $T$ and $T'$ of shape $\lambda$ to be \textit{equivalent} if $up(T)=up(T')$. Let the  \textit{set of shifted domino tableaux} refer to the set up to equivalence. 
\end{definition}

We define the \textit{diagonal reading word} of a shifted domino tableau to be the sequence obtained from reading northwest to southeast along each diagonal $D_{2k}$ with $k\geq0$, starting with $D_0$ and separating entries on distinct diagonals with ``$/$''. Note that equivalent domino tableaux have equal diagonal reading words. 

\begin{example}\label{ex:shifteddominotab}
Let $\lambda = (6,5,5,4)$, which is a pavable partition with 2-quotient $((2,2), (3,3))$. Clearly, this 2-quotient respects the previous conditions of Definition~\ref{def:shiftedpaving}. Below are two different pavings of $\lambda$. The paving on the left is a shifted paving, and the one on the right is not. The problematic domino is highlighted in the second paving.
\begin{center}
\resizebox{3cm}{!}{
\begin{tikzpicture}[node distance=0 cm,outer sep = 0pt]
        \tikzstyle{every node}=[font=\huge]
        \node[bver] (1) at ( 1, 3) {};
        \node[bhor] (2) at ( 2.5, 3.5) {};
        \node[bhor] (3) at ( 2.5, 2.5) {};
        \node[bver] (4) at ( 4, 3) {};
        \node[bhor] (5) at ( 5.5, 3.5) {};
        \node[bver] (6) at ( 1, 1) {};
        \node[bver] (7) at ( 2, 1) {};
        \node[bhor] (8) at ( 3.5, 1.5) {};
        \node[bhor] (9) at ( 3.5, 0.5) {};
        \node[bver] (10) at ( 5, 2) {};
        
        \draw[dashed, thick] ( 0.5, 4) -- ( 6, -1.5) ;
        \draw (6, -1.5) node {$D_0$};
\end{tikzpicture}
}\hspace{1in}
\resizebox{3cm}{!}{
\begin{tikzpicture}[node distance=0 cm,outer sep = 0pt]
        \tikzstyle{every node}=[font=\huge]
        \node[bver] (1) at ( 1, 3) {};
        \node[bhor] (2) at ( 2.5, 3.5) {};
        \node[bhor] (3) at ( 2.5, 2.5) {};
        \node[bver] (4) at ( 4, 3) {};
        \node[bhor] (5) at ( 5.5, 3.5) {};
        \node[bver] (6) at ( 1, 1) {};
        \node[bver] (7) at ( 2, 1) {};
        \node[bver, fill = red] (8) at ( 3, 1) {};
        \node[bver] (9) at ( 4, 1) {};
        \node[bver] (10) at ( 5, 2) {};
        
        \draw[dashed, thick] ( 0.5, 4) -- ( 6, -1.5) ;
        \draw (6, -1.5) node {$D_0$};
\end{tikzpicture}
}
\end{center}

Let $\lambda = (5, 5, 4, 3, 3, 2)$, which is a pavable partition with 2-quotient $((3,1,1),(2,2,2))$. Since neither of the partitions of the 2-quotient respects condition 1 of Definition~\ref{def:shiftedpaving}, then $\lambda$ is not a shifted pavable partition.

The following tableaux $T$, $T'$, and $T''$ are equivalent shifted domino tableaux and are thus considered equal in the set of shifted domino tableaux. The diagonal reading word for each is 
\begin{center}1$',$ 1, 2$',$ 3$',$ 3 / 1, 2$',$ 3$'$ / 3$',$ 4 / 3.\end{center}
\begin{center}
\raisebox{2cm}{$T = $}
\resizebox{4.5cm}{!}{
\begin{tikzpicture}[node distance=0 cm,outer sep = 0pt]
        \tikzstyle{every node}=[font=\LARGE]
        \node[bver] (1) at ( 1, 3) {1'};
        \node[bver] (2) at ( 2, 3) {1};
        \node[bver] (3) at ( 3, 3) {1};
        \node[bver] (4) at ( 4, 3) {2'};
        \node[bhor] (5) at ( 5.5, 3.5) {3'};
        \node[bhor] (6) at ( 7.5, 3.5) {3};
        \node[bhor] (7) at ( 6.5, 2.5) {4};
        \node[bver] (8) at ( 5, 2) {3'};
        \node[bver] (10) at ( 1, 1) {X};
        \node[bver] (11) at ( 2, 1) {X};
        \node[bhor] (12) at ( 3.5, 1.5) {2'};
        \node[bhor] (13) at ( 4.5, 0.5) {3'};
        \node[bver] (14) at ( 3, 0) {X};
        \node[bhor] (15) at ( 1.5, -0.5) {X};
        \node[bhor] (16) at ( 4.5, -0.5) {3};
        
        \draw[dashed] ( 0.5, 4) -- ( 6, -1.5) ;
        \draw (6, -1.5) node {$D_0$};
\end{tikzpicture}
}\hspace{.1in}
\raisebox{2cm}{$T' = $}
\resizebox{4.5cm}{!}{
\begin{tikzpicture}[node distance=0 cm,outer sep = 0pt]
        \tikzstyle{every node}=[font=\LARGE]
        \node[bver] (1) at ( 1, 3) {1'};
        \node[bver] (2) at ( 2, 3) {1};
        \node[bver] (3) at ( 3, 3) {1};
        \node[bver] (4) at ( 4, 3) {2'};
        \node[bhor] (5) at ( 5.5, 3.5) {3'};
        \node[bhor] (6) at ( 7.5, 3.5) {3};
        \node[bhor] (7) at ( 6.5, 2.5) {4};
        \node[bver] (8) at ( 5, 2) {3'};
        \node[bhor] (10) at ( 1.5, 1.5) {X};
        \node[bhor] (11) at ( 1.5, 0.5) {X};
        \node[bhor] (12) at ( 3.5, 1.5) {2'};
        \node[bhor] (13) at ( 4.5, 0.5) {3'};
        \node[bver] (14) at ( 3, 0) {X};
        \node[bhor] (15) at ( 1.5, -0.5) {X};
        \node[bhor] (16) at ( 4.5, -0.5) {3};        
        \draw (6, -1.5) node {};
        
        \draw[dashed] ( 0.5, 4) -- ( 6, -1.5) ;
        \draw (6, -1.5) node {$D_0$};
\end{tikzpicture}
}\hspace{.1in}
\raisebox{2cm}{$T'' = $}
\resizebox{4.5cm}{!}{
\begin{tikzpicture}[node distance=0 cm,outer sep = 0pt]
        \tikzstyle{every node}=[font=\LARGE]
        \node[bver] (1) at ( 1, 3) {1'};
        \node[bver] (2) at ( 2, 3) {1};
        \node[bver] (3) at ( 3, 3) {1};
        \node[bver] (4) at ( 4, 3) {2'};
        \node[bhor] (5) at ( 5.5, 3.5) {3'};
        \node[bhor] (6) at ( 7.5, 3.5) {3};
        \node[bhor] (7) at ( 6.5, 2.5) {4};
        \node[bver] (8) at ( 5, 2) {3'};
        \node[bhor] (10) at ( 1.5, 1.5) {X};
        \node[bver] (11) at ( 1, 0) {X};
        \node[bhor] (12) at ( 3.5, 1.5) {2'};
        \node[bhor] (13) at ( 4.5, 0.5) {3'};
        \node[bhor] (14) at ( 2.5, 0.5) {X};
        \node[bhor] (15) at ( 2.5, -0.5) {X};
        \node[bhor] (16) at ( 4.5, -0.5) {3};
        
        \draw[dashed] ( 0.5, 4) -- ( 6, -1.5) ;
        \draw (6, -1.5) node {$D_0$};
\end{tikzpicture}
}
\end{center}
\end{example}


\subsection{Bijection between shifted domino tableaux and shifted Young tableaux}\label{sec:shiftedbijection}

Chemli proves the following bijection. 

\begin{theorem}[Theorem 3.1 \cite{chemli}]
Let $\lambda$ be a shifted pavable partition with 2-quotient $(\mu,\nu)$. The set of shifted domino tableaux of shape $\lambda$ is in bijection with the set of pairs $(t_1,t_2)$ of shifted Young tableaux of shape $(\mu,\nu)$.
\label{thm:bijectiondomino4}
\end{theorem}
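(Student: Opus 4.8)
The plan is to adapt the diagonal-reading bijection $\Gamma$ of Theorem~\ref{thm:bijectiondomino} to the shifted setting, in the same spirit as the generalization $\Gamma^*$ built for Theorem~\ref{thm:bijectiondomino2}. The guiding observation is that both a shifted domino tableau (up to the stated equivalence) and a shifted Young tableau are completely determined by the filling of their $up$ portion: the $down$ dominoes (resp. boxes) all carry $X$ and are fixed by the shape. Since the diagonal reading word records exactly the diagonals $D_{2k}$ with $k\geq 0$, it captures precisely this $up$ data, so the whole construction can be carried out on diagonal reading words just as before.

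First I would define the forward map. Given a shifted domino tableau $T$ of shape $\lambda$ with $2$-quotient $(\mu,\nu)$, I form its diagonal reading word and let $w_1$ (resp. $w_2$) be the subword coming from the type $1$ (resp. type $2$) dominoes, with each prime travelling with its entry. I then let $t_1$ and $t_2$ be the fillings of $up(\mu)$ and $up(\nu)$ whose diagonal reading words are $w_1$ and $w_2$, completing the $down$ boxes with $X$. To see this is well defined, one checks, exactly as in the unshifted case, that a shifted Young tableau of a fixed shape is reconstructed uniquely from its diagonal reading word: the shape fixes the box positions along each diagonal, and the word prescribes the prime-decorated entries in weakly increasing order from northwest to southeast.

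Next I would verify that $t_1$ and $t_2$ are honest shifted Young tableaux, and that the map is invertible. Weak monotonicity along rows and down columns should follow from the ``weakly southeast'' geometry already used in the proof of Theorem~\ref{thm:bijectiondomino2}: restricting to the minimum entries recovers the unshifted picture governed by $\Gamma$, and the relative diagonal positions of two dominoes that map to horizontally or vertically adjacent cells force the required inequality. The genuinely new conditions—at most one unprimed $i$ per column and at most one primed $i'$ per row—must be transported from the corresponding conditions on $T$; here I would trace how two dominoes carrying a common value $i$ (resp. $i'$) on neighboring diagonals map to the appropriate adjacent cells and argue that a forbidden repetition in $t_k$ would force a forbidden configuration in $T$, and conversely. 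The inverse then mirrors $\Gamma^{-1}$: processing values in the linear order $1'<1<2'<2<\cdots$, one inserts a type $1$ (resp. type $2$) domino on $D_{2k}$ for each cell carrying that value on $D_k$ of $t_1$ (resp. $t_2$) and fills the $down$ dominoes with $X$; that $\Gamma^{*}$-style maps are mutually inverse follows because they are governed by $\Gamma$ and $\Gamma^{-1}$ on minimum entries.

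I expect the main obstacle to be the behavior at the boundary diagonal $D_0$, where $up$ and $down$ meet. The equivalence relation on shifted domino tableaux—identifying tableaux that agree on $up$—must match exactly the fact that shifted Young tableaux record only the $up$ filling, and a vertical domino sitting on $D_0$ has its lower cell strictly below $D_0$ while still being read as part of $up$. The second condition in Definition~\ref{def:shiftedpaving}, forbidding such a domino whose adjacent left neighbors all lie strictly below $D_0$, is precisely what should rule out the ambiguous boundary configurations and guarantee that the reconstructed shapes are the valid shifted shapes $\mu$ and $\nu$ appearing in the $2$-quotient. Reconciling this boundary analysis with the prime/unprime bookkeeping, so that both maps are simultaneously well defined, shape-preserving, and mutually inverse, is the delicate heart of the argument.
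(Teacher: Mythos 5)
Your proposal is correct and follows essentially the same route as the paper, which presents Chemli's bijection $\Gamma_s$ as the map $\Gamma$ applied while simply carrying the $X$-filled dominoes along (and $\Gamma_s^{-1}$ as $\Gamma^{-1}$ with the $X$-cells to the left of each newly inserted value added at the same step); your variant of reading only the diagonals $D_{2k}$ with $k\geq 0$ and then appending the $X$'s dictated by the shape is an equivalent reformulation, since under $\Gamma$ the dominoes of $down(\lambda)$ land exactly in $down(\mu)\cup down(\nu)$. Your closing remarks on the boundary diagonal $D_0$ and the second condition of Definition~\ref{def:shiftedpaving} correctly identify where the delicacy lies, and the verifications you defer to the ``weakly southeast'' geometry are exactly the ones the paper itself carries out only in the set-valued setting (Theorems~\ref{thm:bijectiondomino2} and~\ref{thm:bijectiondomino3}), the present statement being cited from Chemli rather than reproved.
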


The bijection is a slight modification of the map $\Gamma$ from Theorem~\ref{thm:bijectiondomino}. We call this modified map $\Gamma_{s}$ and it goes from the set of shifted domino tableaux of shape $\lambda$ to the set of pairs of shifted Young tableaux of shape $(\mu,\nu)$.

The modification of $\Gamma$ to obtain $\Gamma_s$ is quite simple. We apply $\Gamma$ to a shifted domino tableau as if it were a domino tableau, without considering if the dominoes are filled with $X $'s or with integers. For example, let us apply $\Gamma_s$ to the tableau $T$ of Example~\ref{ex:shifteddominotab}.  

\begin{center}
\raisebox{2in}{$T = $}
\resizebox{15cm}{!}{
\begin{tikzpicture}[node distance=0 cm,outer sep = 0pt]

        \tikzstyle{every node}=[font=\Large]
        \node[bver] (1) at ( 1, 8) {1'};
        \node[bver] (2) at ( 2, 8) {1};
        \node[bver] (3) at ( 3, 8) {1};
        \node[bver] (4) at ( 4, 8) {2'};
        \node[bhor] (5) at ( 5.5, 8.5) {3'};
        \node[bhor] (6) at ( 7.5, 8.5) {3};
        \node[bhor] (7) at ( 6.5, 7.5) {4};
        \node[bver] (8) at ( 5, 7) {3'};
        \node[bver] (10) at ( 1, 6) {X};
        \node[bver] (11) at ( 2, 6) {X};
        \node[bhor] (12) at ( 3.5, 6.5) {2'};
        \node[bhor] (13) at ( 4.5, 5.5) {3'};
        \node[bver] (14) at ( 3, 5) {X};
        \node[bhor] (15) at ( 1.5, 4.5) {X};
        \node[bhor] (16) at ( 4.5, 4.5) {3};
        
        \draw[dashed] ( 0.5, 9) -- ( 6, 3.5) ;
        \draw (6, 3.5) node[right] {$D_0$};
        \draw[dashed] ( 0.5, 7) -- ( 5, 2.5) ;
        \draw (5, 2.5) node[right] {$D_{-2}$};
        \draw[dashed] ( 0.5, 5) -- ( 4, 1.5) ;
        \draw (4, 1.5) node[right] {$D_{-4}$};
        \draw[dashed] ( 2.5, 9) -- ( 7, 4.5) ;
        \draw (7, 4.5) node[right] {$D_{2}$};
        \draw[dashed] ( 4.5, 9) -- ( 8, 5.5) ;
        \draw (8, 5.5) node[right] {$D_{4}$};
        \draw[dashed] ( 6.5, 9) -- ( 9, 6.5) ;
        \draw (9, 6.5) node[right] {$D_{6}$};

        \draw [->] ( 9, 5) -- ( 11, 7);
        \draw (11,7) node[right] {Type 1};
        \draw [->] ( 9, 5) -- ( 11, 3.5);
        \draw (11,3.5) node[right] {Type 2};
        
        \draw [dashed] ( 12, 10) -- ( 15, 7);
        \draw ( 15, 7) node[right] {$D_0$};
        \draw [dashed] ( 13, 10) -- ( 15.5, 7.5);
        \draw ( 15.5, 7.5) node[right] {$D_{2}$};
        \draw [dashed] ( 14, 10) -- ( 16, 8);
        \draw ( 16, 8) node[right] {$D_{4}$};
        \draw [dashed] ( 15, 10) -- ( 16.5, 8.5);
        \draw ( 16.5, 8.5) node[right] {$D_{6}$};
        \draw [dashed] ( 12, 9) -- ( 14.5, 6.5);
        \draw ( 14.5, 6.5) node[right] {$D_{-2}$};
        \draw [dashed] ( 12, 8) -- ( 14, 6);
        \draw ( 14, 6) node[right] {$D_{-4}$};
        
        \draw [dashed] ( 12, 3) -- ( 15, 0);
        \draw ( 15, 0) node[right] {$D_0$};
        \draw [dashed] ( 13, 3) -- ( 15.5, .5);
        \draw ( 15.5, .5) node[right] {$D_{2}$};
        \draw [dashed] ( 14, 3) -- ( 16, 1);
        \draw ( 16, 1) node[right] {$D_{4}$};
        \draw [dashed] ( 15, 3) -- ( 16.5, 1.5);
        \draw ( 16.5, 1.5) node[right] {$D_{6}$};
        \draw [dashed] ( 12, 2) -- ( 14.5, -.5);
        \draw ( 14.5, -.5) node[right] {$D_{-2}$};
        \draw [dashed] ( 12, 1) -- ( 14, -1);
        \draw ( 14, -1) node[right] {$D_{-4}$};
        
        \draw ( 12.5, 9.5) node {1'};
        \draw ( 13.5, 9.5) node {1};
        \draw ( 12.5, 8.5) node {X};
        \draw ( 13.5, 8.5) node {3};
        
        \draw ( 12.5, 2.5) node {1};
        \draw ( 13.5, 2.5) node {2'};
        \draw ( 14.5, 2.5) node {3'};
        \draw ( 15.5, 2.5) node {3};
        \draw ( 12.5, 1.5) node {X};
        \draw ( 13.5, 1.5) node {2'};
        \draw ( 14.5, 1.5) node {3'};
        \draw ( 15.5, 1.5) node {4};
        \draw ( 12.5, 0.5) node {X};
        \draw ( 13.5, 0.5) node {X};
        \draw ( 14.5, 0.5) node {3'};

        \draw [->] ( 18, 8) -- ( 19, 8);
        \draw [->] ( 18, 1) -- ( 19, 1);
        
        \node[bsq] (17)   at ( 20.5, 8.5) {1'};
        \node[bsq] (18)  [right = of 17] {1};
        \node[bsq] (19)   at ( 20.5, 7.5) {X};
        \node[bsq] (20)  [right = of 19] {3};
        \draw ( 22.5, 8) node {\ $ = t_1$};
        
        \node[bsq] (21)   at ( 20.5, 1.5) {1};
        \node[bsq] (22)  [right = of 21] {2'};
        \node[bsq] (23)  [right = of 22] {3'};
        \node[bsq] (24)  [right = of 23] {3};
        \node[bsq] (25)  [below = of 21] {X};
        \node[bsq] (26)  [right = of 25] {2'};
        \node[bsq] (27)  [right = of 26] {3'};
        \node[bsq] (28)  [right = of 27] {4};
        \node[bsq] (29)  [below = of 25] {X};
        \node[bsq] (30)  [right = of 29] {X};
        \node[bsq] (31)  [right = of 30] {3'};
        \draw ( 24.5, 1) node {\ $ = t_2$};

\end{tikzpicture}
}
\end{center}


We see that $\Gamma_s^{-1}$ is also very similar to $\Gamma^{-1}$. It takes as input a pair of shifted Young tableaux and outputs a shifted domino tableau such that $\Gamma_s^{-1}(\Gamma_s(T))$ is equivalent to $T$. However, we need to describe how the algorithm deals with the cells containing $X$.

Suppose we apply $\Gamma_s^{-1}$ to a pair of shifted Young tableaux $(t_1, t_2)$. At step $i$,  we have the pair $(t_1^{(i-1)}, t_2^{(i-1)})$ of shifted Young tableaux, and $u_i$ the smallest integer (with respect to the relation $1'<1<2'<2<\ldots $) appearing in $(t_1, t_2)$ that doesn't appear in $(t_1^{(i-1)}, t_2^{(i-1)})$. If a cell of $t_1$ (resp. $t_2$) containing $u_i$ has cells filled with $X$ to its left, then all those cells are also added into $t_1^{(i)}$ (resp. $t_2^{(i)}$), while preserving their original positions. This ensures that at every step of the procedure, the constructed tableaux $(t_1^{(i)}, t_2^{(i)})$ are shifted Young tableaux. 

For example, let's apply $\Gamma_s^{-1}$ to the following pair of shifted Young tableaux.

\begin{center}
$(t_1, t_2) = 
\left(
\raisebox{.1in}{
\ \begin{ytableau} 1' & 1 \\ X & 3 \end{ytableau}\  , \  \begin{ytableau} 1 & 2' & 3' & 3 \\ X & 2' & 3' & 4 \\ X & X & 3' \end{ytableau} \ }
\right)
$
\end{center}

\ytableausetup{boxsize=.18in}
\begin{minipage}{.53\textwidth}
\raisebox{.1in}{$(1)$ $\left( \ \begin{ytableau} \scriptstyle{1'} \end{ytableau} , \ \varnothing \right) \rightarrow $ }
\resizebox{.58cm}{!}{
\begin{tikzpicture}[node distance=0 cm,outer sep = 0pt]
        \tikzstyle{every node}=[font=\huge]
        \node[bver] (1) at ( 1, 8) {1'};
\end{tikzpicture}
}\end{minipage} \begin{minipage}{.53\textwidth}
\raisebox{.1in}{$(2)$ $ \left( \ \begin{ytableau} \scriptstyle{1'} & \scriptstyle{1} \end{ytableau} , \ \begin{ytableau} \scriptstyle{1} \end{ytableau} \ \right) \rightarrow $ }
\resizebox{1.3875cm}{!}{
\begin{tikzpicture}[node distance=0 cm,outer sep = 0pt]
        \tikzstyle{every node}=[font=\huge]
        \node[bver] (1) at ( 1, 8) {1'};
        \node[bver] (2) at ( 2, 8) {1};
        \node[bver] (3) at ( 3, 8) {1};

\end{tikzpicture}
}\end{minipage}

\begin{minipage}{.53\textwidth}
\raisebox{.22in}{$(3)$ $ \left( \raisebox{.05in}{ \ \begin{ytableau} \scriptstyle{1'} & \scriptstyle{1} \end{ytableau}, \ \begin{ytableau} \scriptstyle{1} & \scriptstyle{2'} \\ \scriptstyle{X} & \scriptstyle{2'} \end{ytableau} \ } \right) \rightarrow $ }
\resizebox{1.88cm}{!}{
\begin{tikzpicture}[node distance=0 cm,outer sep = 0pt]
        \tikzstyle{every node}=[font=\huge]
        \node[bver] (1) at ( 1, 8) {1'};
        \node[bver] (2) at ( 2, 8) {1};
        \node[bver] (3) at ( 3, 8) {1};
        \node[bver] (4) at ( 4, 8) {2'};
        \node[bhor] (12) at ( 3.5, 6.5) {2'};
        \node[bhor] (10) at ( 1.5, 6.5) {$X$};

\end{tikzpicture}
}\end{minipage} \begin{minipage}{.53\textwidth}
\raisebox{.4in}{$(4)$ $ \left( \raisebox{.1in}{ \ \begin{ytableau} \scriptstyle{1'} & \scriptstyle{1} \end{ytableau}, \ \begin{ytableau} \scriptstyle{1} & \scriptstyle{2'} & \scriptstyle{3'} \\ \scriptstyle{X} & \scriptstyle{2'} & \scriptstyle{3'} \\ \scriptstyle{X} & \scriptstyle{X} & \scriptstyle{3'} \end{ytableau} \ } \right) \rightarrow $ }
\resizebox{2.775cm}{!}{
\begin{tikzpicture}[node distance=0 cm,outer sep = 0pt]
        \tikzstyle{every node}=[font=\huge]
        \node[bver] (1) at ( 1, 8) {1'};
        \node[bver] (2) at ( 2, 8) {1};
        \node[bver] (3) at ( 3, 8) {1};
        \node[bver] (4) at ( 4, 8) {2'};
        \node[bhor] (12) at ( 3.5, 6.5) {2'};
        \node[bhor] (5) at ( 5.5, 8.5) {3'};
        \node[bver] (8) at ( 5, 7) {3'};
        \node[bhor] (13) at ( 4.5, 5.5) {3'};
        \node[bhor] (10) at ( 1.5, 6.5) {$X$};
        \node[bhor] (11) at ( 2.5, 5.5) {$X$};
        \node[bver] (14) at ( 1, 5) {$X$};
        
\end{tikzpicture}
}\end{minipage}

\begin{center}

\begin{minipage}{.55\textwidth}
\raisebox{.4in}{$(5)$ $ \left( 
\raisebox{.1in}{ \ \begin{ytableau} \scriptstyle{1'} & \scriptstyle{1} \\ \scriptstyle{X} & \scriptstyle{3} \end{ytableau}, \ \begin{ytableau} \scriptstyle{1} & \scriptstyle{2'} & \scriptstyle{3'} & \scriptstyle{3} \\ \scriptstyle{X} & \scriptstyle{2'} & \scriptstyle{3'} \\ \scriptstyle{X} & \scriptstyle{X} & \scriptstyle{3'} \end{ytableau} \ }
\right) \rightarrow $ }
\resizebox{3.7cm}{!}{
\begin{tikzpicture}[node distance=0 cm,outer sep = 0pt]
        \tikzstyle{every node}=[font=\huge]
        \node[bver] (1) at ( 1, 8) {1'};
        \node[bver] (2) at ( 2, 8) {1};
        \node[bver] (3) at ( 3, 8) {1};
        \node[bver] (4) at ( 4, 8) {2'};
        \node[bhor] (5) at ( 5.5, 8.5) {3'};
        \node[bhor] (6) at ( 7.5, 8.5) {3};
        \node[bver] (8) at ( 5, 7) {3'};
        \node[bhor] (10) at ( 1.5, 6.5) {$X$};
        \node[bhor] (11) at ( 2.5, 4.5) {X};
        \node[bhor] (12) at ( 3.5, 6.5) {2'};
        \node[bhor] (13) at ( 4.5, 5.5) {3'};
        \node[bhor] (11) at ( 2.5, 5.5) {$X$};
        \node[bver] (14) at ( 1, 5) {$X$};
        \node[bhor] (16) at ( 4.5, 4.5) {3};
\end{tikzpicture}
}\end{minipage} 

\begin{minipage}{.55\textwidth}
\raisebox{.4in}{$(6)$ $ \left( \raisebox{.1in}{ \ \begin{ytableau} \scriptstyle{1'} & \scriptstyle{1} \\ \scriptstyle{X} & \scriptstyle{3} \end{ytableau}, \ \begin{ytableau} \scriptstyle{1} & \scriptstyle{2'} & \scriptstyle{3'} & \scriptstyle{3} \\ \scriptstyle{X} & \scriptstyle{2'} & \scriptstyle{3'} & \scriptstyle{4} \\ \scriptstyle{X} & \scriptstyle{X} & \scriptstyle{3'} \end{ytableau} \ } \right) \rightarrow $ }
\resizebox{3.7cm}{!}{
\begin{tikzpicture}[node distance=0 cm,outer sep = 0pt]
        \tikzstyle{every node}=[font=\huge]
        \node[bver] (1) at ( 1, 8) {1'};
        \node[bver] (2) at ( 2, 8) {1};
        \node[bver] (3) at ( 3, 8) {1};
        \node[bver] (4) at ( 4, 8) {2'};
        \node[bhor] (5) at ( 5.5, 8.5) {3'};
        \node[bhor] (6) at ( 7.5, 8.5) {3};
        \node[bhor] (7) at ( 6.5, 7.5) {4};
        \node[bver] (8) at ( 5, 7) {3'};
        \node[bhor] (10) at ( 1.5, 6.5) {$X$};
        \node[bhor] (11) at ( 2.5, 4.5) {X};
        \node[bhor] (12) at ( 3.5, 6.5) {2'};
        \node[bhor] (13) at ( 4.5, 5.5) {3'};
        \node[bhor] (11) at ( 2.5, 5.5) {$X$};
        \node[bver] (14) at ( 1, 5) {$X$};
        \node[bhor] (16) at ( 4.5, 4.5) {3};
\end{tikzpicture}
} \end{minipage}

\end{center}
Notice that we do not recover exactly the tableau $T$ that we started with. Instead, we obtain a shifted domino tableau equivalent to $T$.

As a corollary, we then have the following. 

\begin{corollary}[Theorem 3.2 \cite{chemli}]
Let $\lambda$ be a shifted pavable partition with 2-quotient $(\mu,\nu)$. One has 
\[Q_\mu Q_\nu=\sum_{sh(T)=\lambda}x^T,\]
where we sum over the set of shifted domino tableaux of shape $\lambda$. 
\end{corollary}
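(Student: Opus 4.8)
The plan is to mirror the proof of Theorem~\ref{thm:bijectiondominosym}, replacing the Stanton--White bijection by Chemli's bijection $\Gamma_s$ of Theorem~\ref{thm:bijectiondomino4}. First I would expand the left-hand side combinatorially. By definition of the $Q$-Schur functions, $Q_\mu = \sum_{sh(t_1)=\mu} x^{t_1}$ and $Q_\nu = \sum_{sh(t_2)=\nu} x^{t_2}$, so
\[
Q_\mu Q_\nu = \sum_{(t_1,t_2)} x^{t_1} x^{t_2},
\]
where the sum runs over all pairs $(t_1,t_2)$ of shifted Young tableaux of shape $(\mu,\nu)$. Theorem~\ref{thm:bijectiondomino4} puts these pairs in bijection with the shifted domino tableaux of shape $\lambda$ (taken up to equivalence), so it remains only to check that this bijection preserves the monomial weight, i.e.\ that $x^{t_1} x^{t_2} = x^T$ whenever $\Gamma_s(T) = (t_1,t_2)$.

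Next I would verify weight preservation. Recall that for a shifted (domino) tableau the weight $x^T = \prod_i x_i^{\beta_i(T)}$ records, for each $i$, the total number of boxes (dominoes) carrying $i$ or $i'$, while the $X$-filled dominoes of $down(\lambda)$ contribute nothing. The map $\Gamma_s$ acts purely by forming the diagonal reading word and sorting dominoes into $t_1$ or $t_2$ according to whether they are of type 1 or type 2; it never alters the label inside a domino. Consequently the multiset of primed and unprimed integer labels appearing in $T$ is exactly the union of those appearing in $t_1$ and $t_2$, and $x^{t_1} x^{t_2} = x^T$ follows immediately.

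Finally I would address the one point that requires a little care: the weight must be well defined on equivalence classes of shifted domino tableaux. Since equivalent tableaux share the same $up(T)$ and all integer entries lie on the up dominoes (every down domino is filled with $X$), the exponent $\beta_i(T)$ depends only on $up(T)$, so $x^T$ is constant on each equivalence class; this is also exactly why $\Gamma_s$ descends to the set of shifted domino tableaux taken up to equivalence. The $X$-bookkeeping performed by $\Gamma_s^{-1}$ (adding $X$-dominoes to the left of newly placed integer dominoes) only rearranges weightless boxes and hence leaves $x^T$ untouched. Combining the three steps and summing $x^T = x^{t_1} x^{t_2}$ over the bijection yields $Q_\mu Q_\nu = \sum_{sh(T)=\lambda} x^T$, as claimed. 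The whole argument is routine given Theorem~\ref{thm:bijectiondomino4}; the only genuine (and mild) obstacle is confirming that the weight ignores the $X$'s and therefore respects both the equivalence relation and the bijection.
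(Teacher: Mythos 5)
Your proposal is correct and follows essentially the same route as the paper: expand $Q_\mu Q_\nu$ over pairs of shifted Young tableaux, invoke the bijection of Theorem~\ref{thm:bijectiondomino4}, and observe that the bijection preserves the multiset of (primed and unprimed) integer entries, hence the monomial weight. Your extra remark that the weight is constant on equivalence classes because all integer entries lie in $up(T)$ is a sensible added detail that the paper leaves implicit.
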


\begin{proof}
Each term in the product $Q_\mu Q_\nu$ is represented by a pair of shifted tableaux of shape $(\mu,\nu)$. Theorem~\ref{thm:bijectiondomino4} says that the set of these pairs are in bijection with the set of shifted domino tableaux of shape $\lambda$, and the shifted domino tableau associated to a pair of shifted tableaux $(t_1,t_2)$ has the same multiset of entries as $(t_1,t_2)$.
\end{proof}

\section{Shifted $K$-theoretic generalizations}\label{sec:shiftedK}
\subsection{$K$-theoretic $Q$-Schur functions}
There is a natural $K$-theoretic analogue of the $Q$-Schur functions, introduced by Ikeda--Naruse \cite{ikeda2013k} and Graham--Kreiman \cite{graham2015excited}, called the \textit{$K$-theoretic $Q$-Schur function} and denoted $GQ_\lambda$. In fact, Ikeda and Naruse introduced a more general \textit{$K$-theoretic factorial $Q$-Schur function}, but it will suffice for us to consider the restricted generality. As a natural $K$-theoretic analogue, the $GQ_\lambda$ are related to the $K$-theory of the maximal isotropic Grassmannian of symplectic type.

Using the same linear order on subsets of $\{1'<1<2'<2<\ldots\}$, where $A\leq B$ if $\max(A)\leq\max(B)$, we have the following definition.

\begin{definition}\cite{ikeda2013k}
Let $\lambda=(\lambda_1,\ldots,\lambda_k)$ be a partition with $\lambda_k\geq k$. A \textit{shifted set-valued  Young tableau} of shape $\lambda$ is a filling of the cells of $down(\lambda)$ with $X$ and the cells of $up(\lambda)$ with finite, non-empty sets of positive integers such that 
\begin{itemize}
\item entries weakly increase across rows and down columns,
\item there is at most one occurrence of $i$ in any column and 
\item there is at most one occurrence of $i'$ in any row. 
\end{itemize}
Let $up(T)$ denote the cells of $T$ weakly above $D_0$ along with their filling.
\end{definition}

We associate to each shifted set-valued tableau $T$ a monomial $x^T$,
\[x^T=x_1^{\beta_1(T)}x_2^{\beta_2(T)}\cdots,\] where again $\beta_i(T)$ is the number of occurrences of $i$ and $i'$ in $T$. We also let $|T| = |up(T)|$ denote the number of primed and unprimed integers in $T$. We define the diagonal reading word of shifted set-valued tableau $T$ in the natural way. It is easy to see that the diagonal reading word uniquely defines the shifted set-valued Young tableau.

\begin{definition}\cite{ikeda2013k}
Let $\lambda=(\lambda_1,\ldots,\lambda_k)$ be a partition with $\lambda_k\geq k$. 
The \textit{$K$-theoretic $Q$-Schur function} $GQ_\lambda$ is
\[GQ_\lambda=\sum_{sh(T)=\lambda}(-1)^{|up(T)|-|up(\lambda)|}x^T, \]
where we sum over all shifted set-valued tableaux of shape $\lambda$ and $|up(\lambda)|$ denotes the number of boxes in $up(\lambda)$.
\end{definition}

\begin{example}
We may compute some monomials of $GQ_{(2,2)}$ using the tableaux shown below. 
\[GQ_{(2,2)}=4x_1^2x_2-2x_1^3x_2-4x_1x_2^2x_3-x_1x_2^2x_3^2x_5\pm \cdots\]
\begin{center}
\ytableausetup{boxsize=.24in}
\begin{ytableau}
\scriptstyle{1'} & \scriptstyle{1} \\
\scriptstyle{X} & \scriptstyle{2}
\end{ytableau}
\begin{ytableau}
\scriptstyle{1} & \scriptstyle{1} \\
\scriptstyle{X} & \scriptstyle{2}
\end{ytableau}
\begin{ytableau}
\scriptstyle{1'} & \scriptstyle{1} \\
\scriptstyle{X} & \scriptstyle{2'}
\end{ytableau}
\begin{ytableau}
\scriptstyle{1} & \scriptstyle{1} \\
\scriptstyle{X} & \scriptstyle{2'}
\end{ytableau}
\begin{ytableau}
\scriptstyle{1',1} & \scriptstyle{1} \\
\scriptstyle{X} & \scriptstyle{2}
\end{ytableau}
\begin{ytableau}
\scriptstyle{1',1} & \scriptstyle{1} \\
\scriptstyle{X} & \scriptstyle{2'}
\end{ytableau}
\begin{ytableau}
\scriptstyle{1,2} & \scriptstyle{2} \\
\scriptstyle{X} & \scriptstyle{3}
\end{ytableau}
\begin{ytableau}
\scriptstyle{1',2'} & \scriptstyle{2} \\
\scriptstyle{X} & \scriptstyle{3}
\end{ytableau}
\begin{ytableau}
\scriptstyle{1'} & \scriptstyle{2'} \\
\scriptstyle{X} & \scriptstyle{2,3}
\end{ytableau}
\begin{ytableau}
\scriptstyle{1'} & \scriptstyle{2'} \\
\scriptstyle{X} & \scriptstyle{2',3}
\end{ytableau}
\begin{ytableau}
\scriptstyle{1',2'} & \scriptstyle{2,3'} \\
\scriptstyle{X} & \scriptstyle{3',5}
\end{ytableau}
\end{center}
Note that we have not listed all tableaux with monomials $x_1x_2^2x_3$ and $x_1x_2^2x_3^2x_5$. We see that the diagonal reading word for the rightmost tableau above is $$\{1',2'\},\{3',5\}\ /\ \{2,3'\}.$$ 
\end{example}
Since shifted Young tableaux are shifted set-valued tableaux, we see that the lowest degree terms of $GQ_\lambda$ make up $Q_\lambda$.

\subsection{Shifted set-valued domino tableaux}


\begin{definition}
Let $\lambda$ be a shifted pavable partition. A \textit{shifted set-valued domino tableau} is a filling of the dominoes of $down(\lambda)$ with $X$ and the dominoes of $up(\lambda)$ with finite, nonempty sets of primed and unprimed integers with linear order $\{1'<1<2'<2<\cdots\}$ such that: 
\begin{enumerate}
\item Restricting to the minimum entry in each domino yields a shifted domino tableau.
\item If $F_1$ and $F_2$ are dominoes of the same type on neighboring diagonals and $F_2$ is weakly southeast of $F_1$, then $\max(F_1)\leq\min(F_2)$, and
\begin{itemize}
\item $\max(F_1)<\min(F_2)$ if $F_1$ is located on $D_{2k}$, $F_2$ is on $D_{2(k+1)}$, and $\max(F_1)$ is primed,  and
\item $\max(F_1)<\min(F_2)$ if $F_1$ is located on $D_{2(k+1)}$, $F_2$ is on $D_{2k}$, and $\max(F_1)$ is unprimed. 
\end{itemize}
\end{enumerate}


For $T$ a shifted set-valued domino tableau, let $up(T)$ be the dominoes of $T$ that lie on a diagonal weakly northeast of $D_0$ along with the filling of these dominoes. We consider two shifted set-valued tableaux $T$ and $T'$ to be \textit{equivalent} if $up(T)=up(T')$. The \textit{set of shifted set-valued domino tableaux} refers to the set up to equivalence.
\end{definition}

We define the diagonal reading word in the natural way and again note that equivalent shifted set-valued domino tableaux have equal diagonal reading words.

\begin{example}
Below are two equivalent shifted set-valued domino tableaux of shape $(6, 5, 5, 5, 3)$ with diagonal reading word $\{ 1, 2 \}, 1, 3',\{ 4', 7\}, 4' \ / \ \{ 1, 2'\}, 3', \{3, 4'\} \ / \ 2$. 

\begin{center}
\resizebox{3.8cm}{!}{
\begin{tikzpicture}[node distance=0 cm,outer sep = 0pt]
        \tikzstyle{every node}=[font=\Large]
        \node[bver] (1) at ( 1, 8) {1, 2};
        \node[bver] (2) at ( 2, 8) {1};
        \node[bhor] (5) at ( 3.5, 8.5) {1,2'};
        \node[bhor] (6) at ( 5.5, 8.5) {2};
        \node[bhor] (7) at ( 3.5, 7.5) {3'};
        \node[bhor] (12) at ( 1.5, 6.5) {X};
        \node[bhor] (13) at ( 3.5, 6.5) {3'};
        \node[bver] (14) at ( 1, 5) {X};
        \node[bhor] (15) at (2.5, 5.5) {X};
        \node[bver] (16) at (5, 7) {3,4'};
        \node[bhor] (17) at (4.5, 5.5) {4',7};
        \node[bhor] (18) at (2.5, 4.5) {X};
        \node[bhor] (19) at (4.5, 4.5) {4'};
\end{tikzpicture}
}
\hspace{1in}
\resizebox{3.8cm}{!}{
\begin{tikzpicture}[node distance=0 cm,outer sep = 0pt]
        \tikzstyle{every node}=[font=\Large]
        \node[bver] (1) at ( 1, 8) {1,2};
        \node[bver] (2) at ( 2, 8) {1};
        \node[bhor] (5) at ( 3.5, 8.5) {1,2'};
        \node[bhor] (6) at ( 5.5, 8.5) {2};
        \node[bhor] (7) at ( 3.5, 7.5) {3'};
        \node[bver] (12) at ( 1, 6) {X};
        \node[bhor] (13) at ( 3.5, 6.5) {3'};
        \node[bhor] (14) at ( 1.5, 4.5) {X};
        \node[bver] (15) at (2, 6) {X};
        \node[bver] (16) at (5, 7) {3,4'};
        \node[bhor] (17) at (4.5, 5.5) {4',7};
        \node[bver] (18) at (3, 5) {X};
        \node[bhor] (19) at (4.5, 4.5) {4'};
\end{tikzpicture}
}
\end{center} 

\end{example}

We may now state the main result of this section. 

\begin{theorem} Let $\lambda$ be a shifted pavable partition with 2-quotient $(\mu,\nu)$. The set of shifted set-valued domino tableaux of shape $\lambda$ is in bijection with the set of pairs $(t_1,t_2)$ of shifted set-valued tableaux of shape $(\mu,\nu)$.
\label{thm:bijectiondomino3}
\end{theorem}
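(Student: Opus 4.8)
The plan is to build the bijection by fusing the two single-feature generalizations of $\Gamma$ that the paper has already established: the set-valued map $\Gamma^*$ of Theorem~\ref{thm:bijectiondomino2} and Chemli's shifted map $\Gamma_s$ of Theorem~\ref{thm:bijectiondomino4}. I will call the merged map $\Gamma_s^*$. Given a shifted set-valued domino tableau $T$, I form its diagonal reading word, let $w_1$ (resp.\ $w_2$) be the subword of entries coming from type~1 (resp.\ type~2) dominoes, and let $t_1$ (resp.\ $t_2$) be the unique shifted set-valued tableau whose diagonal reading word is $w_1$ (resp.\ $w_2$). As with $\Gamma_s$, only the dominoes on the nonnegative diagonals $D_{2k}$, $k\ge 0$, are read, and the $X$-cells of $down(t_i)$ are reinstated canonically from the shape; since the reading word determines $up(t_i)$ and the shape is recovered from the minima (below), $\Gamma_s^*$ is well defined on equivalence classes.

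For the forward direction I would first restrict every domino of $T$ to its minimum entry, which by condition~(1) of the definition gives a shifted domino tableau; Chemli's $\Gamma_s$ then guarantees that these minima assemble into a pair of shifted Young tableaux of shape $(\mu,\nu)$, so $(t_1,t_2)$ has shape $(\mu,\nu)$ and its minimum entries already satisfy every shifted-tableau inequality. It remains to promote this to the full set-valued entries. If $b_1$ lies directly left of $b_2$ in some $t_i$ and $F_1,F_2$ are the dominoes mapped to $b_1,b_2$, then $F_1,F_2$ have the same type and $F_2$ sits one diagonal to the right of $F_1$; the geometric argument of Figure~\ref{fig:domsSE} carries over unchanged (it uses only the domino types and $\min(F_1)\le\min(F_2)$) to show $F_2$ is weakly southeast of $F_1$, so condition~(2) yields $\max(b_1)\le\min(b_2)$, and its primed strict clause yields $\max(b_1)<\min(b_2)$ whenever $\max(b_1)$ is primed. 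The analogous use of Figure~\ref{fig:domsstrictSE} for $b_1$ directly above $b_2$ gives weakly increasing columns together with a strict inequality whenever $\max(b_1)$ is unprimed. A short monotonicity argument then upgrades these adjacent strict inequalities to the global conditions ``at most one $i'$ per row'' and ``at most one $i$ per column,'' so $(t_1,t_2)$ is a genuine pair of shifted set-valued tableaux.

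For the inverse I would mimic $\Gamma^{*-1}$: replace each box of $(t_1,t_2)$ by its minimum to get $(t_1',t_2')$, apply Chemli's $\Gamma_s^{-1}$ (including its rule for inserting the $X$-dominoes of $down$) to obtain a shifted domino tableau $T'$, and then reunite each minimum in $T'$ with the remaining elements of the set that accompanied it in $(t_1,t_2)$. Reversing the two figure arguments, and using Figure~\ref{fig:cellspositions} to pin down the relative position of the cells $b_1,b_2$ that map to a pair $F_1,F_2$ of equal-type dominoes on neighboring diagonals, I would check that $T'$ satisfies both defining conditions of a shifted set-valued domino tableau; here the primed/unprimed strict clauses of condition~(2) fall directly out of the ``at most one $i'$ per row'' and ``at most one $i$ per column'' properties of $t_1,t_2$. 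Since $\Gamma_s^{*-1}$ restricts to $\Gamma_s^{-1}$ on minima and only reunites sets on top of it, it is a two-sided inverse of $\Gamma_s^*$ modulo the equivalence on $down$.

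The hard part will be the translation between the four refined inequalities of condition~(2) and the two shifted conditions, which is the only place requiring genuinely new bookkeeping rather than splicing the earlier proofs. Concretely, I must verify that moving one diagonal to the right ($D_{2k}\to D_{2(k+1)}$) corresponds to stepping right along a row of $t_i$ while moving one diagonal to the left corresponds to stepping down a column, so that the primed-maximum strict clause matches the row condition and the unprimed-maximum strict clause matches the column condition; confirming, in addition, that the $X$-cell handling inherited from $\Gamma_s$ never collides with the set-reuniting step is what ultimately guarantees that $\Gamma_s^*$ and $\Gamma_s^{*-1}$ land in the correct target sets.
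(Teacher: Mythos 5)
Your proposal is correct and follows essentially the same route as the paper: define $\Gamma_s^*$ by splitting the diagonal reading word by domino type, reduce the shape and weak inequalities to the minima via Chemli's $\Gamma_s$ and the Figure~\ref{fig:domsSE}/\ref{fig:domsstrictSE} arguments from Theorem~\ref{thm:bijectiondomino2}, and derive the primed/unprimed strictness clauses from the southeast positioning of the relevant dominoes (and, for the inverse, of the cells $b_1,b_2$). The paper's proof is exactly this splice, so no further comparison is needed.
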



\begin{proof}
We define a set-valued version of $\Gamma_s$ and $\Gamma_s^{-1}$ called $\Gamma_s^*$ and $\Gamma_s^{*-1}$ analogously to the definition of $\Gamma^*$ and $\Gamma^{*-1}$ from $\Gamma$ and $\Gamma^{-1}$. See Example~\ref{ex:shiftedsetexample} for an illustration.

Let $T$ be a shifted set-valued domino tableau. We show that $\Gamma_s^*(T)=(t_1,t_2)$ is a pair of shifted set-valued Young tableaux. We can use the same argument as in the proof of Theorem~\ref{thm:bijectiondomino2} to show that $t_1$ and $t_2$ are weakly increasing in rows and columns. To see there is at most one occurrence of $i'$ in a row, suppose $b_1$ lies directly left of $b_2$ in $t_i$ and let $F_1$  and $F_2$ be the dominoes of $T$ such that $\Gamma_s^*$ sends the entries of $F_1$ to $b_1$ and those of $F_2$ to $b_2$. Using the argument from the proof of Theorem~\ref{thm:bijectiondomino2}, we know $F_2$ is weakly southeast of $F_1$. Hence if $\max(b_1)$ is primed, $\max(F_1)<\min(F_2)$, and so $\max(b_1)<\min(b_2)$. We can similarly argue that there is at most one occurrence of $i$ in any column of $t_i$.



Now let $(t_1, t_2)$ be a pair of shifted set-valued Young tableaux. We show that $T = \Gamma_s^{*-1}(t_1, t_2)$ is a shifted set-valued domino tableau. Using an argument analogous to that in the proof of Theorem~\ref{thm:bijectiondomino2}, we see that restricting to the minimum entry in each domino yields a shifted domino tableau and that $\max(F_1)\leq \min(F_2)$ when $F_2$ is weakly southeast of $F_1$.

Suppose that $F_2$ is weakly southeast of $F_1$, $F_1$ is on a diagonal $D_{2k}$ and $F_2$ is on $D_{2(k+1)}$, and that $\max(F_1)$ is primed. Let $b_1$ and $b_2$ be the boxes of $t_i$ such that $\Gamma_s^{*-1}$ sends the entries of $b_1$ to $F_1$ and the entries of $b_2$ to $F_2$. We have shown in the proof of Theorem~\ref{thm:bijectiondomino2} that $b_2$ must be weakly southeast of $b_1$. Since $\max(F_1)$ is primed, $\max(b_1)$ is primed. Then $\max(b_1)<\min(b_2)$ because $t_i$ is a shifted set-valued tableau, and so $\max(F_1)<\min(F_2)$. We can similarly show that $\max(F_1)<\min(F_2)$ if $F_1$ is on $D_{2(k+1)}$, $F_2$ is on $D_{2k}$, and $\max(F_1)$ is unprimed. 

It is clear that $\Gamma_s^*$ and $\Gamma_s^{*-1}$ are inverses because $\Gamma_s$ and $\Gamma_s^{-1}$ are.


\end{proof}

\begin{example}\label{ex:shiftedsetexample} Let $T$ be the following shifted set-valued domino tableau. We apply $\Gamma_s^*$ to the tableau $T$.

\begin{center}
\raisebox{6cm}{$T = $}
\resizebox{15cm}{!}{
\begin{tikzpicture}[node distance=0 cm,outer sep = 0pt]

        \tikzstyle{every node}=[font=\Large]
        \node[bver] (1) at ( 1, 8) {1',1};
        \node[bver] (2) at ( 2, 8) {1};
        \node[bhor] (5) at ( 3.5, 8.5) {2'};
        \node[bhor] (6) at ( 5.5, 8.5) {2,3'};
        \node[bhor] (7) at ( 3.5, 7.5) {2',2};
        \node[bhor] (12) at ( 1.5, 6.5) {X};
        \node[bhor] (13) at ( 3.5, 6.5) {3',3};
        \node[bver] (14) at ( 1, 5) {X};
        \node[bhor] (15) at (2.5, 5.5) {X};
        \node[bver] (16) at (5, 7) {3,4'};
        \node[bhor] (17) at (4.5, 5.5) {4'};
        
        \draw[dashed] ( 0.5, 9) -- ( 6, 3.5) ;
        \draw (6, 3.5) node[right] {$D_0$};
        \draw[dashed] ( 0.5, 7) -- ( 5, 2.5) ;
        \draw (5, 2.5) node[right] {$D_{-2}$};
        \draw[dashed] ( 0.5, 5) -- ( 4, 1.5) ;
        \draw (4, 1.5) node[right] {$D_{-4}$};
        \draw[dashed] ( 2.5, 9) -- ( 7, 4.5) ;
        \draw (7, 4.5) node[right] {$D_{2}$};
        \draw[dashed] ( 4.5, 9) -- ( 8, 5.5) ;
        \draw (8, 5.5) node[right] {$D_{4}$};

        \draw [->] ( 9, 5) -- ( 11, 7);
        \draw (11,7) node[right] {Type 1};
        \draw [->] ( 9, 5) -- ( 11, 3.5);
        \draw (11,3.5) node[right] {Type 2};
        
        \draw [dashed] ( 12, 10) -- ( 15, 7);
        \draw ( 15, 7) node[right] {$D_0$};
        \draw [dashed] ( 13, 10) -- ( 15.5, 7.5);
        \draw ( 15.5, 7.5) node[right] {$D_{2}$};
        \draw [dashed] ( 14, 10) -- ( 16, 8);
        \draw ( 16, 8) node[right] {$D_{4}$};
        \draw [dashed] ( 12, 9) -- ( 14.5, 6.5);
        \draw ( 14.5, 6.5) node[right] {$D_{-2}$};
        \draw [dashed] ( 12, 8) -- ( 14, 6);
        \draw ( 14, 6) node[right] {$D_{-4}$};
        
        \draw [dashed] ( 12, 3) -- ( 15, 0);
        \draw ( 15, 0) node[right] {$D_0$};
        \draw [dashed] ( 13, 3) -- ( 15.5, .5);
        \draw ( 15.5, .5) node[right] {$D_{2}$};
        \draw [dashed] ( 14, 3) -- ( 16, 1);
        \draw ( 16, 1) node[right] {$D_{4}$};
        \draw [dashed] ( 12, 2) -- ( 14.5, -.5);
        \draw ( 14.5, -.5) node[right] {$D_{-2}$};
        \draw [dashed] ( 12, 1) -- ( 14, -1);
        \draw ( 14, -1) node[right] {$D_{-4}$};
        
        \draw ( 12.5, 9.5) node {1',1};
        \draw ( 13.5, 9.5) node {2',2};
        
        \draw ( 12.5, 2.5) node {1};
        \draw ( 13.5, 2.5) node {2'};
        \draw ( 14.5, 2.5) node {2,3'};
        \draw ( 12.5, 1.5) node {X};
        \draw ( 13.5, 1.5) node {3',3};
        \draw ( 14.5, 1.5) node {3,4'};
        \draw ( 12.5, 0.5) node {X};
        \draw ( 13.5, 0.5) node {X};
        \draw ( 14.5, 0.5) node {4'};

        \draw [->] ( 18, 8) -- ( 19, 8);
        \draw [->] ( 18, 1) -- ( 19, 1);
        
        \node[bsq] (17)   at ( 20.5, 8.5) {1',1};
        \node[bsq] (18)  [right = of 17] {2',2};
        \draw ( 22.5, 8.5) node {\ $ = t_1$};
        
        \node[bsq] (21)   at ( 20.5, 1.5) {1};
        \node[bsq] (22)  [right = of 21] {2'};
        \node[bsq] (23)  [right = of 22] {2,3'};
        \node[bsq] (25)  [below = of 21] {X};
        \node[bsq] (26)  [right = of 25] {3',3};
        \node[bsq] (27)  [right = of 26] {3,4'};
        \node[bsq] (29)  [below = of 25] {X};
        \node[bsq] (30)  [right = of 29] {X};
        \node[bsq] (31)  [right = of 30] {4'};
        \draw ( 23.5, 0.5) node {\ $ = t_2$};

\end{tikzpicture}
}
\end{center}

If we take the pair of shifted set-valued tableaux $(t_1, t_2)$, we will see that we reconstruct $T$ exactly the same way as in Section~\ref{sec:shiftedbijection}, with integer set entries instead of integers.

\ytableausetup{boxsize=.23in}

\begin{center}
$(t_1, t_2) = 
\left( \raisebox{.15in}{ \begin{ytableau} \scriptstyle{1',1} & \scriptstyle{2',2} \end{ytableau}\  , \  \begin{ytableau} \scriptstyle{1} & \scriptstyle{2'} & \scriptstyle{2,3'} \\ \scriptstyle{X} & \scriptstyle{3',3} & \scriptstyle{3,4'} \\ \scriptstyle{X} & \scriptstyle{X} & \scriptstyle{4'} \end{ytableau} \ } \right) $
\end{center}

\begin{minipage}{.6\textwidth}
\raisebox{.2in}{$(1)$ $\left( \ \begin{ytableau} \scriptstyle{1',1} \end{ytableau} , \ \varnothing \right) \rightarrow $ }
\resizebox{0.85cm}{!}{
\begin{tikzpicture}[node distance=0 cm,outer sep = 0pt]
        \tikzstyle{every node}=[font=\Large]
        \node[bver] (1) at ( 1, 8) {1',1};
\end{tikzpicture}
}\end{minipage} 
\begin{minipage}{.52\textwidth}
\raisebox{.2in}{$(2)$ $ \left( \ \begin{ytableau} \scriptstyle{1',1} \end{ytableau} , \ \begin{ytableau} \scriptstyle{1} \end{ytableau} \ \right) \rightarrow $ }
\resizebox{1.4cm}{!}{
\begin{tikzpicture}[node distance=0 cm,outer sep = 0pt]
        \tikzstyle{every node}=[font=\Large]
        \node[bver] (1) at ( 1, 8) {1',1};
        \node[bver] (2) at ( 2, 8) {1};
\end{tikzpicture}
}\end{minipage}

\begin{center}

\begin{minipage}{.52\textwidth}
\raisebox{.2in}{$(3)$ $ \left( \ \begin{ytableau} \scriptstyle{1',1} & \scriptstyle{2',2} \end{ytableau}, \ \begin{ytableau} \scriptstyle{1} & \scriptstyle{2'} \end{ytableau} \ \right) \rightarrow $ }
\resizebox{2.6cm}{!}{
\begin{tikzpicture}[node distance=0 cm,outer sep = 0pt]
        \tikzstyle{every node}=[font=\Large]
        \node[bver] (1) at ( 1, 8) {1',1};
        \node[bver] (2) at ( 2, 8) {1};
        \node[bhor] (5) at ( 3.5, 8.5) {2'};
        \node[bhor] (7) at ( 3.5, 7.5) {2',2};
\end{tikzpicture}
}\end{minipage} 

\begin{minipage}{.6\textwidth}
\raisebox{.2in}{$(4)$ $ \left( \ \begin{ytableau} \scriptstyle{1',1} & \scriptstyle{2',2}  \end{ytableau}, \ \begin{ytableau} \scriptstyle{1} & \scriptstyle{2'} & \scriptstyle{2,3'} \end{ytableau} \ \right) \rightarrow $ }
\resizebox{3.7cm}{!}{
\begin{tikzpicture}[node distance=0 cm,outer sep = 0pt]
        \tikzstyle{every node}=[font=\Large]
        \node[bver] (1) at ( 1, 8) {1',1};
        \node[bver] (2) at ( 2, 8) {1};
        \node[bhor] (5) at ( 3.5, 8.5) {2'};
        \node[bhor] (6) at ( 5.5, 8.5) {2,3'};
        \node[bhor] (7) at ( 3.5, 7.5) {2',2};

\end{tikzpicture}
}\end{minipage}

\begin{minipage}{.6\textwidth}
\raisebox{0.35in}{$(5)$ $ \left( \ \begin{ytableau} \scriptstyle{1',1} & \scriptstyle{2',2}  \end{ytableau}, \ \begin{ytableau} \scriptstyle{1} & \scriptstyle{2'} & \scriptstyle{2,3'} \\ \scriptstyle{X} & \scriptstyle{3',3} \end{ytableau} \ \right) \rightarrow $ }
\resizebox{3.7cm}{!}{
\begin{tikzpicture}[node distance=0 cm,outer sep = 0pt]
        \tikzstyle{every node}=[font=\Large]
        \node[bver] (1) at ( 1, 8) {1',1};
        \node[bver] (2) at ( 2, 8) {1};
        \node[bhor] (5) at ( 3.5, 8.5) {2'};
        \node[bhor] (6) at ( 5.5, 8.5) {2,3'};
        \node[bhor] (7) at ( 3.5, 7.5) {2',2};
        \node[bhor] (12) at ( 1.5, 6.5) {X};
        \node[bhor] (13) at ( 3.5, 6.5) {3',3};
        
\end{tikzpicture}
}\end{minipage}

\begin{minipage}{.6\textwidth}
\raisebox{0.35in}{$(6)$ $ \left( \ \begin{ytableau} \scriptstyle{1',1} & \scriptstyle{2',2}  \end{ytableau}, \ \begin{ytableau} \scriptstyle{1} & \scriptstyle{2'} & \scriptstyle{2,3'} \\ \scriptstyle{X} & \scriptstyle{3',3} & \scriptstyle{3,4'} \end{ytableau} \ \right) \rightarrow $ }
\resizebox{3.7cm}{!}{
\begin{tikzpicture}[node distance=0 cm,outer sep = 0pt]
        \tikzstyle{every node}=[font=\Large]
        \node[bver] (1) at ( 1, 8) {1',1};
        \node[bver] (2) at ( 2, 8) {1};
        \node[bhor] (5) at ( 3.5, 8.5) {2'};
        \node[bhor] (6) at ( 5.5, 8.5) {2,3'};
        \node[bhor] (7) at ( 3.5, 7.5) {2',2};
        \node[bhor] (12) at ( 1.5, 6.5) {X};
        \node[bhor] (13) at ( 3.5, 6.5) {3',3};
        \node[bver] (16) at (5, 7) {3,4'};
\end{tikzpicture}
}\end{minipage} 

\begin{minipage}{.6\textwidth}
\raisebox{0.65in}{$(7)$ $ \left( \raisebox{.1in}{ \ \begin{ytableau} \scriptstyle{1',1} & \scriptstyle{2',2}  \end{ytableau}, \ \begin{ytableau} \scriptstyle{1} & \scriptstyle{2'} & \scriptstyle{2,3'} \\ \scriptstyle{X} & \scriptstyle{3',3} & \scriptstyle{3,4'} \\ \scriptstyle{X} & \scriptstyle{X} & \scriptstyle{4'} \end{ytableau} \ } \right) \rightarrow $ }
\resizebox{3.7cm}{!}{
\begin{tikzpicture}[node distance=0 cm,outer sep = 0pt]
        \tikzstyle{every node}=[font=\Large]
        \node[bver] (1) at ( 1, 8) {1',1};
        \node[bver] (2) at ( 2, 8) {1};
        \node[bhor] (5) at ( 3.5, 8.5) {2'};
        \node[bhor] (6) at ( 5.5, 8.5) {2,3'};
        \node[bhor] (7) at ( 3.5, 7.5) {2',2};
        \node[bhor] (12) at ( 1.5, 6.5) {X};
        \node[bhor] (13) at ( 3.5, 6.5) {3',3};
        \node[bver] (14) at ( 1, 5) {X};
        \node[bhor] (15) at (2.5, 5.5) {X};
        \node[bver] (16) at (5, 7) {3,4'};
        \node[bhor] (17) at (4.5, 5.5) {4'};
\end{tikzpicture}
}\end{minipage}

\end{center}

\end{example}

\begin{corollary}
Let $\lambda$ be a shifted pavable partition with 2-quotient $(\mu,\nu)$. Then
\[GQ_\mu GQ_\nu = \sum_{sh(T)=\lambda}(-1)^{|up(T)|-|up(\lambda)|}x^T,\]
where we sum over all shifted set-valued domino tableaux of shape $\lambda$, $|up(T)|$ denotes the number of positive integers in $up(T)$ and $|up(\lambda)|$ denotes the number of dominoes in $up(\lambda)$.
\end{corollary}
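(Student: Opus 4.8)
The plan is to mirror the proofs of the two corollaries following Theorem~\ref{thm:bijectiondomino2} and Theorem~\ref{thm:bijectiondomino4}, using the bijection $\Gamma_s^*$ established in Theorem~\ref{thm:bijectiondomino3} as the engine. First I would expand the left-hand side: each monomial appearing in the product $GQ_\mu GQ_\nu$ arises from a choice of a shifted set-valued tableau $t_1$ of shape $\mu$ and a shifted set-valued tableau $t_2$ of shape $\nu$, contributing the term $(-1)^{|up(t_1)|-|up(\mu)|}(-1)^{|up(t_2)|-|up(\nu)|}x^{t_1}x^{t_2}$. By Theorem~\ref{thm:bijectiondomino3}, the pair $(t_1,t_2)$ corresponds to a unique shifted set-valued domino tableau $T$ of shape $\lambda$, so it suffices to check that each such term equals the term $(-1)^{|up(T)|-|up(\lambda)|}x^T$ indexed by $T$ on the right-hand side.

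Next I would verify that the monomials agree, that is, $x^{t_1}x^{t_2}=x^T$. Since the $X$-entries of $down(\lambda)$, $down(\mu)$, and $down(\nu)$ contribute nothing to any monomial, only the integer entries matter, and $\Gamma_s^*$ merely redistributes the integer entries of $up(T)$ among $up(t_1)$ and $up(t_2)$ without altering them. Thus the multiset of primed and unprimed integers in $(t_1,t_2)$ equals that of $T$, giving $x^{t_1}x^{t_2}=x^T$.

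The heart of the argument is the sign bookkeeping. I would establish the two identities $|up(t_1)|+|up(t_2)|=|up(T)|$ and $|up(\mu)|+|up(\nu)|=|up(\lambda)|$, after which the product of signs becomes
\[(-1)^{(|up(t_1)|+|up(t_2)|)-(|up(\mu)|+|up(\nu)|)}=(-1)^{|up(T)|-|up(\lambda)|},\]
as required. Both identities follow from the diagonal correspondence underlying $\Gamma_s^*$: a domino on $D_{2k}$ is sent to a box on $D_k$, so dominoes weakly northeast of $D_0$ (those on $D_{2k}$ with $k\geq 0$) correspond precisely to boxes weakly northeast of $D_0$ (those on $D_k$ with $k\geq 0$), with type 1 dominoes feeding $t_1$ and type 2 dominoes feeding $t_2$. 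The first identity then follows by counting the integer entries of these $up$ dominoes and boxes, and the second by counting the $up$ dominoes and boxes themselves.

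I expect the main obstacle to be making this sign identity fully rigorous: one must confirm that $\Gamma_s^*$ respects the partition of diagonals into the weakly-northeast and strictly-southwest halves, so that $up$ dominoes map bijectively to $up$ boxes and no integer entry is miscounted across the $D_0$ boundary. This is essentially a restatement of how $\Gamma_s^*$ acts on positions, and once it is pinned down the remaining computation is the routine exponent arithmetic displayed above.
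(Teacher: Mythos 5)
Your proposal is correct and follows essentially the same route as the paper: expand $GQ_\mu GQ_\nu$ over pairs $(t_1,t_2)$, invoke the bijection of Theorem~\ref{thm:bijectiondomino3} to match each pair with a shifted set-valued domino tableau $T$, observe $x^{t_1}x^{t_2}=x^T$, and reconcile the signs via $|up(t_1)|+|up(t_2)|=|up(T)|$ and $|up(\mu)|+|up(\nu)|=|up(\lambda)|$. Your explicit justification of these two counting identities through the $D_{2k}\leftrightarrow D_k$ diagonal correspondence is a detail the paper leaves implicit, but it is the right reason and does not change the argument.
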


\begin{proof}
Consider a term in the product $GQ_\mu GQ_\nu$. This monomial corresponds to a pair of shifted set-valued tableaux: $t_1$ of shape $\mu$ and $t_2$ of shape $\nu$. The pair $(t_1,t_2)$ corresponds to some shifted set-valued domino tableau $T$ of shape $\lambda$ by Theorem~\ref{thm:bijectiondomino3}. It is then clear from the previous bijection that $x^{t_1} x^{t_2} = x^T$.

We now examine the sign of $x^{t_1}x^{t_2}$ in $GQ_\mu GQ_\nu$. We see that it appears with sign 
\[(-1)^{|up(t_1)|-|up(\mu)|}(-1)^{|up(t_2)|-|up(\nu)|}=(-1)^{|up(t_1)|+|up(t_2)|-(|up(\mu)|+|up(\nu)|)}=(-1)^{|up(T)|-|up(\lambda)|}.\] This gives the desired result.
\end{proof}

\section*{Acknowledgements}
This paper is the result of the an undergraduate research project, where FM-G was funded by 
an CRM-ISM Summer Undergraduate Scholarship. The authors are grateful to Hugh Thomas for his support and to the LaCIM community. RP received support from NSERC, CRM-ISM, and the Canada Research Chairs Program.
We thank the anonymous reviewer for comments to help improve exposition.
\bibliographystyle{alpha}
\bibliography{main.bib}

\end{document}